\newtheorem{corollary}{Corollary}{\bf}{}
{\bf}{}
\newtheorem{lemma}{Lemma}{\bf}{}
\newtheorem{theorem}{Theorem}{\bf}{}
{\bf}{}
\newtheorem{assumption}{Assumption}{\bf}{}
\newenvironment{proof}[1][Proof]{\noindent\textbf{#1.} }{\ \rule{0.5em}{0.5em}}
\def\a{\alpha}
\def\tg{\tilde g}
\def\re{\mathbb{R}}
\def\F{\mathcal{F}}
\def\argmin{\mathop{\rm argmin}}
\newcommand{\EXP}[1]{\mathsf{E}\!\left[#1\right] }
\title{On Stochastic Subgradient Mirror-Descent Algorithm with Weighted Averaging}
\author{Angelia Nedi\'c and Soomin Lee}
\begin{document}
\maketitle

\centerline{Dedicated to Paul Tseng}
\begin{abstract}
This paper considers stochastic subgradient mirror-descent method
for solving constrained convex  minimization problems.
In particular, a stochastic subgradient mirror-descent method
with weighted iterate-averaging is investigated and its per-iterate convergence rate is analyzed.
The novel part of the approach is in the choice of weights that are used to construct the averages.
Through the use of  these weighted averages, we show that the known
optimal rates can be obtained with simpler algorithms than those currently existing in the literature.
Specifically, by suitably choosing the stepsize values, one can obtain the rate of the order
$1/k$ for strongly convex
functions, and the rate $1/\sqrt{k}$ for general convex functions (not necessarily differentiable).
Furthermore, for the latter case, it is shown that a stochastic subgradient mirror-descent
with iterate averaging
converges (along a subsequence) to an optimal solution, almost surely,
even with the stepsize of the form
$1/\sqrt{1+k}$, which was not previously known. The stepsize choices that achieve the best rates are
those proposed by Paul Tseng for acceleration of proximal gradient methods~\cite{Tseng2008}.
\end{abstract}

\section{Introduction}\label{sec:introduction}
The work in this paper is motivated by several recent papers showing that using averaging is beneficial when constructing fast (sub)gradient algorithms for solving convex optimization problems. Specifically, for problems where the objective function has Lipschitz continuous gradients, Tseng~\cite{Tseng2008} has recently proposed an accelerated gradient method that uses averaging to construct a generic algorithm with the convergence rate of $\frac{1}{k^2}$. This convergence rate is known to be the best in the class of convex functions with Lipshitz gradients~\cite{NesterovBook2004}, for which the first fast algorithm is originally constructed by Nesterov~\cite{Nesterov83} for unconstrained problems, and recently extended in~\cite{Beck2009} to a larger class of problems. Averaging has also recently been used by Ghadimi and Lan
in~\cite{GhadimiLan2012} to develop an algorithm that has the rate $\frac{1}{k^2}$ if the objective function has Lipschitz continuous gradients, and the rate $\frac{1}{k}$ if the objective function is strongly convex (even if stochastic subgradient is used). Some interesting results have been shown by
Juditsky et al.~\cite{Juditsky2008} for mirror-descent algorithm
with averaging as employed to construct aggregate estimators with the best achievable learning rate.

Recently, Lan~\cite{Lan2012}
has considered averaging technique for the mirror-descent algorithm
for stochastic composite problems (involving the sum of a smooth objective and a nonsmooth objective function), where the accelerated stepsizes akin to those considered by
Tseng~\cite{Tseng2008} have also been proposed.
The algorithms proposed by Tseng in~\cite{Tseng2008}, and by Ghadimi and Lan
in~\cite{GhadimiLan2012}, rely on a construction of three sequences, some of which use a form of averaging.
A different form of averaging has been considered by Nesterov in~\cite{Nesterov2009},
where the averaging is used in both primal and dual spaces
to construct a subgradient method with the rate $\frac{1}{\sqrt{k}}$,
which is known to be the best convergence rate of any first-order method for convex functions in
general~\cite{NesterovBook2004}. A much simpler iterate averaging scheme dates back to Nemirovski and
Yudin~\cite{NemYudin1978} for convex-concave saddle-point problems. Such a scheme has also
been considered by Polyak and Juditsky~\cite{Polyak1992} for stochastic (gradient) approximations,
and by Polyak~\cite{Polyak01} for convex feasibility problems.
A somewhat different approach has been
considered by Juditsky et al.~\cite{Juditsky2005}, where a variant of the mirror-descent algorithm with averaging has been proposed for the classification problem.
More recently, the simple iterate averaging
has been considered by Nemirovski et al.~\cite{Nemirovski2009}  to show the best achievable rate in
the context of stochastic subgradients. Recently, Juditsky and Nesterov~\cite{JudNesterov2010}
have further investigated some special extensions of the primal-dual averaging method
for a more general class of uniformly convex functions,
while Rakhlin et al.~\cite{Rakhlin2012} have investigated a form of ``truncated averaging'' of the iterates
for a stochastic subgradient method in order to achieve the best known rate for strongly convex functions.

In this paper, we further explore the benefits of averaging by providing a somewhat
different analytical approach to the stochastic subgradient mirror-descent methods.
In particular, we consider a stochastic subgradient mirror-descent method combined with a simple
averaging of the iterates. The averaging process is motivated by that of
Nemirovski and Yudin~\cite{NemYudin1978}, which was also used later on
by Polyak and Juditsky~\cite{Polyak1992} and by Polyak~\cite{Polyak01}.
In this averaging process, the averaged iterates are not used in the construction of the algorithms,
but rather occur as byproducts of the algorithms, where the averaging weights
are specified in terms of the stepsizes that the algorithm is using. The novel part of this work
is in the choice of the stepsize  (and averaging weights),
which are motivated by those proposed by Tseng~\cite{Tseng2008} and include
the Nesterov stepsize~\cite{Nesterov83,NesterovBook2004}.
The development relies on a new choice of ``a Lyapunov function"
that is used to measure the progress of an algorithm, which
combined with a relatively simple analysis allows us
 to recover the known rate results
and also develop some new almost sure convergence results.

Specifically, we consider two cases namely, the case when the objective function is strongly convex
while the constraint set is just convex and closed, and the case when the objective function is
just convex (not necessarily differentiable) while the constraint set is convex and compact.
In both cases, our algorithm achieves the best known convergence rates.
For strongly convex functions, we show that the algorithm with averaging achieves
the best convergence rate of $\frac{1}{k}$ per iteration $k$. This result is the same as that
of~Juditsky et al.~\cite{Juditsky2005},
Ghadimi and Lan in~\cite{GhadimiLan2012}, and Rakhlin et al.~\cite{Rakhlin2012}.
However, we show that this optimal
rate is attained with a simpler algorithm than the algorithms in~\cite{GhadimiLan2012},~\cite{Juditsky2005}, and with
an averaging that is different from the one used in~\cite{Rakhlin2012}.
For a compact constraint set, our algorithm achieves the best known rate of
$\frac{1}{\sqrt{k}}$ at iteration $k$ by using the stepsize of the form $\frac{1}{\sqrt{k}}$.
The rate $\frac{1}{\sqrt{k}}$ is achievable by a time-varying stepsize sequence with an averaging
over all iterates that are generated up to a given time,
which is different from the window-based averaging proposed in~\cite{GhadimiLan2012}.
The novel part of the work is in the establishment of the almost sure sub-sequential convergence for the averaging sequence obtained by the method
with a non-summable stepsize $\alpha_k=\frac{1}{\sqrt{k}}$ (as given in Theorem~4). To the best of our knowledge, this is the first almost sure convergence result for a non-summable stepsize.
The existing convergence results for the stochastic mirror-descent (hence, for the stochastic subgradient method) that uses a non-summable stepsize show
the convergence of the function values in the expectation only~\cite{GhadimiLan2012},~\cite{Juditsky2005}.
Our result is new even for the mirror-descent method
(hence, also for the subgradient method) without stochastic errors, as it also shows the sub-sequential
convergence of the average sequence to an optimal solution.

The paper is organized as follows. In Section~\ref{sec:prelim}, we formalize the problem, describe the
basic stochastic subgradient mirror-descent method and discuss our assumptions.
In Section~\ref{sec:stronglyconvex}, we present the results for the algorithm with
iterate averaging for strongly convex functions. In Section~\ref{sec:compact}, we analyze the convergence properties of the algorithm for the case when the constraint set is compact. We report some simulation results
in Section~\ref{sec:numerics} and provide concluding remarks in Section~\ref{sec:con}.


\section{Stochastic Subgradient Mirror-Descent Algorithm}\label{sec:prelim}
Consider the problem of minimizing a convex but not necessarily differentiable function $f$
over a constraint set $X$:
\begin{eqnarray}\label{eqn:prob}
\hbox{minimize \ } && f(x)\cr
\hbox{subject to  } && x\in X.\end{eqnarray}
We will use $f^*$ to denote the optimal value of the problem and $X^*$ to denote the solution set of the problem,
\[f^*=\inf_{x\in X} f(x),\qquad X^*=\{x\in X\mid f(x)=\min_{y\in X} f(y)\}.\]

The set $X$ is assumed to be {\it convex and closed}, while the function $f$ is assumed to be {\it convex and continuous at all points $x\in X$}. In addition,
{\it a subgradient $g(x)$ is assumed to exists} at every point $x\in X$, i.e., for every
$x\in X$, there is a vector $g(x)$ such that
\[f(x)+\langle g(x), y-x\rangle \le f(y)\qquad\hbox{for all }y\in X.\]
In principle, a subgradient definition requires that the above inequality is satisfied for all $x$ in the domain of $f$, but for the purpose of our discussion it suffices to have the inequality valid just over the set $X$.
All the aforementioned assumptions are blanket assumptions for the rest of the paper.
Regarding the notation used throughout the paper, let us note that  we view
vectors as column vectors,  and
we use $\langle x,y\rangle$ to denote the inner product of two vectors $x,y\in\re^n$.
We assume that $\re^n$ is equipped with some norm $\|\cdot\|$, and use $\|\cdot\|_*$ to denote its
dual norm.

Strongly convex non-differentiable optimization problems arise most prominently in machine learning
as regularized stochastic learning problems~\cite{Vapnik}, (also for example, see~\cite{Xiao2010} and the detailed literature overview therein). These problems are of the following generic form:
\[\hbox{minimize } \EXP{f(x,u)} + g(x)\qquad \hbox{over all $x\in\re^n$},\]
where $x$ is the optimization variable and $u$ is a random vector  with an unknown distribution governing
the random observations of the input-output data pairs. The function $f(x,u)$ is a loss function, which is convex but often non-differentiable, while $g(x)$ is a strongly convex function that regularizes the problem by promoting some desired features for the optimal solution of the problem.
As a specific example, consider
the maximum-margin separating hyperplane problem,
one of the canonical classification problems within the support vector machines methodology, which can be described as follows.
Given a set of $M$ data-label pairs $\{(a_j; b_j),\, 1\le j\le M\}$, where $a_j\in\re^n$
and $b_j\in\{-1,+1\}$ for all $j$, we want to find a vector $x^*\in\re^n$ that solves
the following convex optimization problem:
\[\hbox{minimize } f(x)
=\frac{\lambda}{2}\|x\|^2 + \frac{1}{M}\sum_{i=1}^M\max\{b_j\langle a_j,x\rangle-1,\, 0\}
\quad \hbox{over all $x\in\re^n$},\]
where $\lambda>0$ is a regularization parameter (also a strong convexity constant for the objective function).
The second term in the objective function is the
empirical estimate of the expected loss based on $M$ random observations of input-output data pairs.
The optimal solution to this problem is known as the maximum-margin separating hyperplane~\cite{Vapnik}.

We consider stochastic subgradient mirror-descent algorithm for solving problem~\eqref{eqn:prob}.
In particular, we assume that instead of a subgradient $g(x)$ at a point $x$, we can compute an erroneous
subgradient $\tg(x)$ and use it within the mirror-descent algorithm. The mirror-descent algorithm, as proposed in~\cite{Yudin1978}, is a generalization of the standard subgradient method where the Euclidean norm is replaced with a generic Bregman distance function~\cite{Bregman1967}.
The Bregman distance function is defined in terms of a continuously differentiable
and strongly convex function
$w(\cdot)$ over the set $X$, with a scalar $\mu_w>0$, which satisfies
\[w(y)\ge w(x)+\langle\nabla w(x),y-x\rangle +\frac{\mu_w}{2}\|y-x\|^2\qquad\hbox{for all $x,y\in X$}.\]
The Bregman distance function induced by $w(\cdot)$ is denoted by $D_w$ and given by
\[D_w(x,z)=w(z) - w(x) -\langle\nabla w(x),z-x\rangle \qquad\hbox{for all $x,z\in X$}.\]
From the definition it can be seen that the Bregman distance function has the following properties
\begin{equation}\label{eqn:bdprop}
D_w(x,z)-D_w(y,z)=D_w(x,y)+\langle\nabla w(y)-\nabla w(x),z-y\rangle\qquad\hbox{for all $x,y,z\in X$},
\end{equation}
\begin{equation}\label{eqn:bdstr}
D_w(x,z)\ge \frac{\mu_w }{2}\|x-z\|^2\qquad\hbox{for all $x,z\in X$},
\end{equation}
where relation~\eqref{eqn:bdstr} follows by the strong convexity of the function $w$.
Furthermore, $D_w(x,z)$ is differentiable with respect to $z$. Letting
$\nabla_z D_w(\cdot,\cdot)$ denote the partial derivative of $D_w(x,z)$ with respect to the $z$ variable,
we have
\begin{equation}\label{eqn:gradz}
\nabla_z D_w(x,z)=\nabla w(z) -\nabla w(x)\qquad\hbox{for all $x,z\in X$}.
\end{equation}

A subgradient mirror-descent method generates iterates, starting with an initial point $x_0\in X$,
according to the following update rule:
\begin{equation*}
x_{k+1}=\argmin_{z\in X}\left\{\a_k\langle g_k,z-x_k\rangle + D_w(x_k,z)\right\}
\qquad\hbox{for all }k\ge0,\end{equation*}
where $\a_k>0$ is a stepsize and $g_k$ is a subgradient of $f(x)$ evaluated at $x=x_k$.
The algorithm works under the premise that the set $X$ has a structure admitting efficient
computation of $x_{k+1}$, such as for example when a closed form of $x_{k+1}$ is available.

In order to deal with a more general class of methods, we will assume that subgradients are evaluated with some random error, and these erroneous subgradients are used instead of the subgradients, i.e.,
the subgradient $g_k$ is replaced by a  noisy subgradient $\tilde g_k$.
This gives rise to a stochastic mirror-descent algorithm of the following form:
\begin{equation}\label{eqn:mdstoch}
x_{k+1}=\argmin_{z\in X}\left\{\a_k\langle \tilde g_k,z-x_k\rangle +D_w(x_k,z)\right\}
\qquad\hbox{for all }k\ge0,
\end{equation}
where the initial point $x_0\in X$ may also be random with $\EXP{\|x_0\|^2}<\infty$, but independent
of the random subgradient process $\{\tg_k\}$.
This algorithm also arises when the objective function is given as the expectation of a random function, i.e.,
$f(x)=\EXP{F(x,\xi)}$. In this case, at iteration $k$, a sample function $F(x_k,\xi_k)$
is assumed to be available and $\tg_k$ is a subgradient of $F(x,\xi_k)$ at $x=x_k$.

The standard stochastic subgradient method is a special case of method~\eqref{eqn:mdstoch},
when $w(x)=\frac{1}{2}\|x\|_2,$ where $\|\cdot\|_2$ is the Euclidean distance. In this case,
the Bregman distance function reduces to
\[D_w(x,z)=\frac{1}{2}\|x-z\|_2^2,\]
and the stochastic mirror-descent method becomes the standard stochastic subgradient-projection method:
\[x_{k+1}=\argmin_{y\in X}\left\{\a_k\langle
\tg_k,y-x_k\rangle+\frac{1}{2}\|y-x_k\|_2^2\right\}\qquad\hbox{for all }k\ge0.\]

In addition to the iterate sequence $\{x_k\}$ generated by the stochastic mirror-descent algorithm,
we will also consider a sequence $\{\hat x_k\}$ of weighted-averages of the iterates,
with $\hat x_k$ defined by
\[\hat x_k=\sum_{t=0}^k\beta_t x_t,\]
where $\beta_0,\beta_1,\ldots, \beta_k$ are non-negative scalars with the sum equal to 1. These convex weights will be appropriately defined in terms of the stepsize values $\a_0,\a_1,\ldots,\a_k.$
The weight selection (and the forthcoming analysis) are motivated by an alternative re-scaled version
of the mirror-descent algorithm:
\begin{equation}\label{eqn:alter}
x_{k+1}=\argmin_{z\in X}\left\{\langle \tilde g_k,z-x_k\rangle +\frac{1}{\a_k}D_w(x_k,z)\right\},
\end{equation}
where the scalar $\frac{1}{\a_k}$ is interpreted as a penalty value at points $z\in X$ that are far from
$x_k$ in terms of the Bregman distance. Intuitively, as the method progresses and we have
a higher confidence in the quality of the iterate $x_k$, it makes sense to increase the penalty
$\frac{1}{\a_k}$ for deviating from $x_k$ too far, which is done by decreasing the stepsize $\a_k$ to 0.
However, the decrease rate of the stepsize $\a_k$ is crucial for the convergence rate of the algorithm,
and this rate has to be adjusted depending on the properties of the objective function $f$.

The alternative description~\eqref{eqn:alter} of the stochastic mirror-descent algorithm
suggests that we may use the weighted (expected) Bregman distance function
$\frac{1}{\a_k}\EXP{D_w(x_k,y)}$, with $y\in X$, as a Lyapunov function to measure the progress of the method, which may provide us with some additional insights about the convergence of the method.
This point of view motivates our choice of the weighted iterate-averages and the overall
development in the rest of the paper.

In what follows, we assume that the stochastic subgradients are well behaved in the sense of the following
assumption.
\begin{assumption}\label{asum:stochmir}
Let the stochastic subgradient $\tilde g(x)$ be such that almost surely
\[\EXP{\tilde g(x)\mid x}=g(x)\qquad\hbox{for all }x\in X.\]
\[\EXP{\|\tilde g(x)\|_*^2\mid x}\le \tilde C^2\qquad
\hbox{for some scalar $\tilde C>0$ and for all $x\in X$}.\]
\end{assumption}
\noindent
When subgradients are evaluated without any error, i.e., $\tilde g(x)=g(x)$,  Assumption~\ref{asum:stochmir}
is satisfied if the subgradients are uniformly bounded over $X$, i.e.,
$\|g(x)\|\le C$ for all $x\in X$ and some $C>0$, such as for example when $X$ is compact.
Furthermore, if the subgradients are uniformly
bounded to start with, and the subgradient errors are such that $\EXP{\tilde g(x)\mid x}=g(x)$ and
$\EXP{\|\tilde g(x)-g(x)\|_*^2\mid x}\le \nu^2$ for some $\nu>0$ and all $x\in X$ almost surely, then
Assumption~\ref{asum:stochmir} holds with $\tilde C^2=2C^2 +2\nu^2$ for a general norm.
Moreover, if the norm $\|\cdot\|$ is the Euclidean norm, then
Assumption~\ref{asum:stochmir} holds with 
$\tilde C^2=C^2 +\nu^2$. 

Now, we define the $\sigma$-field generated by the history of the algorithm:
\[\F_k=\sigma\{x_0,\tg_0,\ldots,\tg_{k-1}\}\qquad\hbox{for }k\ge1,\]
with $\F_0=\sigma\{x_0\}$. Using this $\sigma$-field,
under  Assumption~\ref{asum:stochmir}, we have the following basic property of the iterates
$\{x_k\}$ generated by the stochastic mirror-descent algorithm.

\begin{lemma}\label{lemma:basic}
Let Assumption~\ref{asum:stochmir} hold.
Then, for method~\eqref{eqn:mdstoch} we have almost surely for all $z\in X$ and $k\ge0$,
\[\EXP{D_w(x_{k+1},z)\mid \F_k} + \a_k \langle g_k, x_k-z\rangle
\le D_w(x_k,z)+\frac{\a_k^2\tilde C^2}{2\mu_w}.\]
\end{lemma}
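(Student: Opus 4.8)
The plan is to combine the first-order optimality condition for the proximal update \eqref{eqn:mdstoch} with the three-point identity \eqref{eqn:bdprop}, convert the resulting bound into a purely deterministic per-step inequality using the strong-convexity property \eqref{eqn:bdstr} together with a Young-type inequality, and finally take the conditional expectation $\EXP{\cdot\mid\F_k}$ and invoke Assumption~\ref{asum:stochmir}.

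First I would write the optimality condition for $x_{k+1}=\argmin_{z\in X}\{\a_k\langle\tg_k,z-x_k\rangle+D_w(x_k,z)\}$. The objective is convex and differentiable in $z$, and by \eqref{eqn:gradz} its gradient at $z$ is $\a_k\tg_k+\nabla w(z)-\nabla w(x_k)$, so minimizing over the convex set $X$ gives
\[\langle\a_k\tg_k+\nabla w(x_{k+1})-\nabla w(x_k),\,z-x_{k+1}\rangle\ge0\qquad\hbox{for all }z\in X.\]
Next, apply \eqref{eqn:bdprop} with $x=x_k$ and $y=x_{k+1}$ to write $D_w(x_{k+1},z)=D_w(x_k,z)-D_w(x_k,x_{k+1})-\langle\nabla w(x_{k+1})-\nabla w(x_k),z-x_{k+1}\rangle$, and bound the last inner product from below by $-\a_k\langle\tg_k,z-x_{k+1}\rangle$ using the optimality condition. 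This yields
\[D_w(x_{k+1},z)\le D_w(x_k,z)-D_w(x_k,x_{k+1})-\a_k\langle\tg_k,x_{k+1}-z\rangle.\]
Splitting $\langle\tg_k,x_{k+1}-z\rangle=\langle\tg_k,x_{k+1}-x_k\rangle+\langle\tg_k,x_k-z\rangle$, bounding $-\a_k\langle\tg_k,x_{k+1}-x_k\rangle\le\a_k\|\tg_k\|_*\|x_{k+1}-x_k\|$ by H\"older's inequality, using $D_w(x_k,x_{k+1})\ge\tfrac{\mu_w}{2}\|x_k-x_{k+1}\|^2$ from \eqref{eqn:bdstr}, and maximizing $-\tfrac{\mu_w}{2}t^2+\a_k\|\tg_k\|_* t$ over $t\ge0$, I obtain the deterministic bound
\[D_w(x_{k+1},z)\le D_w(x_k,z)+\frac{\a_k^2\|\tg_k\|_*^2}{2\mu_w}-\a_k\langle\tg_k,x_k-z\rangle.\]

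Finally, take $\EXP{\cdot\mid\F_k}$ on both sides. For fixed $z\in X$, the points $x_k$ and $z$ are $\F_k$-measurable, so Assumption~\ref{asum:stochmir} gives $\EXP{\tg_k\mid\F_k}=g_k$ in the linear term and $\EXP{\|\tg_k\|_*^2\mid\F_k}\le\tilde C^2$ in the quadratic term, producing $D_w(x_k,z)+\tfrac{\a_k^2\tilde C^2}{2\mu_w}-\a_k\langle g_k,x_k-z\rangle$ on the right-hand side; rearranging is exactly the claimed inequality. The substantive steps here are short — the optimality condition and the three-point identity — and the one place that requires a little care is the last step, namely justifying that conditioning on $\F_k$ reproduces the conditional statements of Assumption~\ref{asum:stochmir} (which is routine, since $x_k$ is $\F_k$-measurable and, given $\F_k$, the noisy subgradient $\tg_k$ is $\tg(x_k)$ with the stated conditional mean and second moment). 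I therefore expect no genuine obstacle, only this measurability bookkeeping.
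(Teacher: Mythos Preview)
Your proposal is correct and follows essentially the same route as the paper: first-order optimality condition, three-point identity \eqref{eqn:bdprop}, strong convexity bound \eqref{eqn:bdstr}, splitting $\langle\tg_k,x_{k+1}-z\rangle$ through $x_k$, and then conditioning on $\F_k$. The only cosmetic difference is that the paper bounds the cross term $\a_k\langle\tg_k,x_{k+1}-x_k\rangle$ in one shot via Fenchel's inequality $|\langle p,q\rangle|\le\tfrac12\|p\|_*^2+\tfrac12\|q\|^2$, whereas you use H\"older followed by maximizing the resulting quadratic in $\|x_{k+1}-x_k\|$; both produce the identical term $\a_k^2\|\tg_k\|_*^2/(2\mu_w)$.
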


\begin{proof}
By the first-order optimality condition for the point $x_{k+1}$, we have
\[0\le \left\langle \a_k\tilde g_k +\nabla_z D_w(x_k,x_{k+1}), z-x_{k+1}\right\rangle
\qquad\hbox{for all }z\in X,\]
where $\nabla_z D_w(\cdot,\cdot)$ denotes the partial derivative of the Bregman distance function with respect to the second variable.
Using $\nabla_z D_w(x,z)=\nabla w(z)-\nabla w(x)$ (cf.~Eq.~\eqref{eqn:gradz}), we have
\[0\le \langle\a_k \tilde g_k+\nabla w(x_{k+1})-\nabla w(x_k), z-x_{k+1}\rangle
\qquad\hbox{for all }z\in X,\]
or equivalently
\begin{equation}\label{eqn:optone-err}
\a_k \langle \tilde g_k,x_{k+1}-z\rangle \le \langle\nabla w(x_{k+1})-\nabla w(x_k),z-x_{k+1}\rangle.
\end{equation}

From relation~\eqref{eqn:bdprop}, with $x=x_k$, $y=x_{k+1}$ and an arbitrary $z\in X$, we obtain
\[\left\langle \nabla w(x_{k+1})-\nabla w(x_k),z-x_{k+1}\right\rangle
= D_w(x_k,z)-D_w(x_{k+1},z)-D_w(x_k,x_{k+1}).\]
Substituting the preceding equality in Eq.~\eqref{eqn:optone-err}, we see that for all $z\in X$,
\begin{equation*}
\a_k \langle \tilde g_k,x_{k+1}-z\rangle \le D_w(x_k,z)-D_w(x_{k+1},z)-D_w(x_k,x_{k+1}).
\end{equation*}
By the strong convexity of $w(x)$, we have $D_w(x_k,x_{k+1})\ge \frac{\mu_w}{2}\, \|x_k-x_{k+1}\|^2$
(cf.~Eq.~\eqref{eqn:bdstr})
implying that
\begin{equation}\label{eqn:relac}
\a_k \langle \tilde g_k,x_{k+1}-z\rangle \le D_w(x_k,z)-D_w(x_{k+1},z)- \frac{\mu_w}{2}\, \|x_k-x_{k+1}\|^2.
\end{equation}
Next we estimate the inner-product term $\langle \tilde g_k,x_{k+1}-z\rangle$, as follows:
\begin{eqnarray}\label{eqn:esto}
\a_k\langle \tilde g_k,x_{k+1}-z\rangle
&=&\a_k\langle \tilde g_k, x_{k+1}-x_k\rangle +\a_k\langle \tilde g_k, x_{k}-z\rangle\cr
&\ge & -\left|\langle \frac{\a_k }{\sqrt{\mu_w}}\tilde g_k, \sqrt{\mu_w} (x_{k+1}-x_k)\rangle\right|
+ \a_k \langle \tilde g_k,x_k-z\rangle \cr
&\ge & -\frac{\a_k^2}{2\mu_w}\|\tilde g_k\|_*^2  - \frac{\mu_w}{2} \|x_{k+1}-x_k\|^2
+ \a_k \langle \tilde g_k, x_k-z\rangle ,\end{eqnarray}
where the last relation follows from Fenchel's inequality, i.e.,
$|\langle p,q\rangle|\le \frac{1}{2}\|p\|^2 +\frac{1}{2}\|q\|_*^2$,
and $\|\cdot\|_*$ is the conjugate norm for $\|\cdot\|$.
Upon substituting~\eqref{eqn:esto} in relation~\eqref{eqn:relac}, re-arranging the terms, and noting that
the terms involving $\|x_{k+1}-x_k\|^2$ get cancelled, we obtain
\[\a_k \langle \tilde g_k, x_k-z\rangle \le D_w(x_k,z)-D_w(x_{k+1},z)+\frac{\a_k^2}{2\mu_w}\|\tilde g_k\|_*^2.\]
By taking the expectation conditioned on $\F_k$ and using Assumption~\ref{asum:stochmir}, we have
\[\a_k \langle g_k, x_k-z\rangle \le D_w(x_k,z)-\EXP{D_w(x_{k+1},z)\mid \F_k}+\frac{\a_k^2\tilde C^2}{2\mu_w},\]
and the desired relation follows.
\end{proof}

With Lemma~\ref{lemma:basic} we are ready to explore the properties of stochastic mirror-descent algorithm
and its weighted-average iterates.
In the following two sections, we will consider two special instances of problem~\eqref{eqn:prob},
namely the case when $f$ is strongly convex but no additional assumptions are made on $X$, and the case when $X$ is bounded (in addition to being convex and closed) but no additional assumptions are made on $f$
aside from convexity, as given in Section~\ref{sec:prelim}.

\section{Strongly Convex Objective Function}\label{sec:stronglyconvex}
In this section, we restrict our attention to problem~\eqref{eqn:prob} with a strongly convex objective function
$f$. Specifically,  we assume that $f$ is strongly convex with a constant $\mu_f>0$ over the set $X$ with respect to the underlying norm,
\[f(y)\ge f(x)+\langle g(x), y-x\rangle + \frac{\mu_f}{2}\|y-x\|^2\qquad\hbox{for all }x,y\in X,\]
or equivalently
\begin{equation}\label{eqn:strongconv}
\langle g(x), x-y\rangle \ge f(x) - f(y) + \frac{\mu_f}{2}\|x-y\|^2\qquad\hbox{for all }x,y\in X.\end{equation}
For such a function, we consider the stochastic subgradient mirror-descent method with the stepsize
$\frac{\a_k}{\mu_f}$. Specifically, the algorithm assumes the following form:
\begin{equation}\label{eqn:mdstoch2}
x_{k+1}=\argmin_{z\in X}\left\{\frac{\a_k}{\mu_f}\langle \tilde g_k,z-x_k\rangle +D_w(x_k,z)\right\}.
\end{equation}
The stepsize $\a_k$ is assumed to be such that
\begin{equation}\label{eqn:step}
\alpha_{k}\in(0,1]\qquad\hbox{ and }\qquad \frac{1-\alpha_{k+1}}{\alpha^2_{k+1}}\le \frac{1}{\alpha^2_k}
\qquad\hbox{for all }k\ge0,
\end{equation}
where $\alpha_0=1$. The above stepsize choice have been proposed by
Tseng~\cite{Tseng2008} as a generalization of the stepsize selection due to Nesterov~\cite{Nesterov83}
who had developed it in the construction of the optimal algorithm for
minimizing a convex function with Lipschitz gradients
(see also a recent paper by Beck and Teboulle~\cite{Beck2009}).

The following result for the stepsize will be useful in the analysis of the method.

\begin{lemma}\label{lemma:step}
Let the stepsize $\a_k$ satisfy relation~\eqref{eqn:step}. We then have
\[\a_k^2\ge \frac{1}{\sum_{t=0}^k\frac{1}{\a_t}}\qquad\hbox{for all $k\ge0$}.\]
\end{lemma}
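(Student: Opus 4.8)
The plan is to prove the bound $\a_k^2 \ge 1/\sum_{t=0}^k \frac{1}{\a_t}$ by induction on $k$, rewriting the claim in the equivalent form $\frac{1}{\a_k^2} \le \sum_{t=0}^k \frac{1}{\a_t}$. First I would check the base case $k=0$: since $\a_0 = 1$, both sides equal $1$, so the inequality holds with equality. For the inductive step, I would assume $\frac{1}{\a_k^2} \le \sum_{t=0}^k \frac{1}{\a_t}$ and try to deduce $\frac{1}{\a_{k+1}^2} \le \sum_{t=0}^{k+1} \frac{1}{\a_t} = \sum_{t=0}^k \frac{1}{\a_t} + \frac{1}{\a_{k+1}}$.

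The key algebraic input is the stepsize condition \eqref{eqn:step}, namely $\frac{1-\a_{k+1}}{\a_{k+1}^2} \le \frac{1}{\a_k^2}$, which rearranges to $\frac{1}{\a_{k+1}^2} \le \frac{1}{\a_k^2} + \frac{1}{\a_{k+1}}$. Chaining this with the inductive hypothesis immediately gives $\frac{1}{\a_{k+1}^2} \le \frac{1}{\a_k^2} + \frac{1}{\a_{k+1}} \le \sum_{t=0}^k \frac{1}{\a_t} + \frac{1}{\a_{k+1}} = \sum_{t=0}^{k+1}\frac{1}{\a_t}$, closing the induction. Taking reciprocals (all quantities are positive since $\a_t \in (0,1]$) yields the stated inequality.

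I do not anticipate a genuine obstacle here; the only point requiring a little care is the elementary rearrangement of the second part of \eqref{eqn:step} into the additive form $\frac{1}{\a_{k+1}^2} \le \frac{1}{\a_k^2} + \frac{1}{\a_{k+1}}$, and making sure the reciprocal step is justified by positivity of $\a_t$. Everything else is a one-line induction, so I would keep the write-up short and just display the base case, the rearrangement of the stepsize condition, and the inductive chaining.
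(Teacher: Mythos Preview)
Your proposal is correct and essentially identical to the paper's proof: both rearrange the stepsize condition~\eqref{eqn:step} into $\frac{1}{\a_{k+1}^2} \le \frac{1}{\a_k^2} + \frac{1}{\a_{k+1}}$ and then accumulate this inequality, the paper by telescoping the sum over $t=0,\ldots,k-1$ and you by induction, which are two phrasings of the same computation. The base case $\a_0=1$ and positivity of the $\a_t$ are handled the same way in both.
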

\begin{proof}
For $k=0$, the result holds since $\a_0=1$. From relation~\eqref{eqn:step} it follows that
\[\frac{1}{\a_{t+1}^2}-\frac{1}{\a_t^2}\le \frac{1}{\a_{t+1}},\]
which upon summing over $t=0,1,\ldots, k-1,$ yields
\[\frac{1}{\a_{k}^2} - \frac{1}{\a_0^2}\le \sum_{t=1}^k\frac{1}{\a_t}.\]
This and the fact $\a_0=1$ imply the desired relation.
\end{proof}

We next investigate the behavior of the iterates generated by the stochastic mirror-descent algorithm
with a stepsize $\a_k$ satisfying~\eqref{eqn:step}. Our subsequent results rely
on an additional property of the  Bregman distance function $D_w(x,z)$ requiring that
\begin{equation}\label{eqn:dist}
D_w(x,z)\le \frac{1}{2}\|x-z\|^2\qquad\hbox{for all }x,z\in X.\end{equation}
This relation holds for example when the Bregman distance generating function $w$ has
Lipschitz gradients over $X$ with a constant $L=\frac{1}{2}$, i.e.,
\[\langle\nabla w(x)-\nabla w(z),x-z\rangle\le \frac{1}{2}\|x-z\|^2\qquad\hbox{for all }x,z\in X.\]
To see this, note that by the preceding Lipschitz gradient property of $w$, we have for any $x,z\in X$,
\[\frac{1}{2}\|x-z\|^2\ge \langle\nabla w(z)-\nabla w(x),z-x\rangle
\ge w(z)-w(x) -\langle \nabla w(x),z-x\rangle = D(x,z),\]
where the last inequality follows by the convexity of $w$ over $X$, i.e.,
$\langle\nabla w(z), z-x\rangle\ge w(z)-w(x)$ for all $x,z\in X$.
Furthermore, if $\tilde w$ has Lipschitz gradients over $X$ with a scalar $L>0$,
\[\langle\nabla \tilde w(x)-\nabla \tilde w(z),x-z\rangle\le L\|x-z\|^2\qquad\hbox{for all }x,z\in X,\]
then the scaled function $w(x)=\frac{1}{2L}\tilde w(x)$ has Lipschitz gradients with
$L=\frac{1}{2}$. Such a scaled function $w$ can be used for generating
the Bregman distance function satisfying~\eqref{eqn:dist}.
For example, if the underlying norm in $\re^n$ is the Euclidean norm, then choosing
$w(x)=\frac{1}{2}\|x\|_2^2$ would result in $D_w(x,z)=\frac{1}{2}\|x-z\|_2^2$, which satisfies
relation~\eqref{eqn:dist} as equality.

Note that, when $f$ is strongly convex, the minimization problem in~\eqref{eqn:prob}
has a unique minimizer (see Theorem 2.2.6 in~\cite{NesterovBook2004}, or
Proposition 2.1.2 in~\cite{Bertsekas03}), which we denote by $x^*$.
In the following lemma we provide a relation for the iterates $x_k$ and the solution $x^*$.

\begin{lemma}\label{lemma:ratemirdes}
Let $f$ be strongly convex over $X$ with a constant $\mu_f>0$, and
let Assumption~\ref{asum:stochmir} hold.
Also, let the Bregman distance function $D_w$ be such that~\eqref{eqn:dist} holds.
Consider the method~\eqref{eqn:mdstoch2} with
the stepsize sequence $\{\a_k\}$ satisfying the conditions in~\eqref{eqn:step}.
Then, for the iterate sequence $\{x_k\}$ generated by the method and
the solution $x^*$ of problem~\eqref{eqn:prob}, we have
\[\EXP{D_w(x_{k+1},x^*) } +\frac{1}{\mu_f} \,\frac{1}{(\sum_{t=0}^k\frac{1}{\a_t})}\,
\sum_{t=0}^k\frac{1}{\a_{t}}\left(\EXP{f(x_t)} - f(x^*)\right)
\le (k+1)\a_k^2\,\frac{\tilde C^2}{2\mu_f^2\mu_w}.\]
\end{lemma}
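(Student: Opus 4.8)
The plan is to apply Lemma~\ref{lemma:basic} to the rescaled algorithm~\eqref{eqn:mdstoch2}, which uses stepsize $\a_k/\mu_f$ in place of $\a_k$, and then exploit strong convexity of $f$ to absorb the troublesome $\|x_k-x^*\|^2$ term. Specifically, in Lemma~\ref{lemma:basic} substitute $\a_k \mapsto \a_k/\mu_f$ and $z=x^*$, giving almost surely
\[
\EXP{D_w(x_{k+1},x^*)\mid\F_k} + \frac{\a_k}{\mu_f}\langle g_k, x_k-x^*\rangle \le D_w(x_k,x^*) + \frac{\a_k^2\tilde C^2}{2\mu_f^2\mu_w}.
\]
Now invoke the strong convexity inequality~\eqref{eqn:strongconv} with $x=x_k$, $y=x^*$ to get $\langle g_k,x_k-x^*\rangle \ge f(x_k)-f(x^*) + \frac{\mu_f}{2}\|x_k-x^*\|^2$, and then use property~\eqref{eqn:dist}, i.e.\ $D_w(x_k,x^*)\le \frac12\|x_k-x^*\|^2$, so that $\frac{\a_k}{\mu_f}\cdot\frac{\mu_f}{2}\|x_k-x^*\|^2 = \frac{\a_k}{2}\|x_k-x^*\|^2 \ge \a_k D_w(x_k,x^*)$. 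Substituting and taking total expectations yields the one-step recursion
\[
\EXP{D_w(x_{k+1},x^*)} + \frac{\a_k}{\mu_f}\bigl(\EXP{f(x_k)}-f(x^*)\bigr) \le (1-\a_k)\,\EXP{D_w(x_k,x^*)} + \frac{\a_k^2\tilde C^2}{2\mu_f^2\mu_w}.
\]

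Next I would divide by $\a_k^2$ to set up a telescoping sum. Writing $d_k := \EXP{D_w(x_k,x^*)}$ and $\delta_k := \EXP{f(x_k)}-f(x^*)\ge 0$, the recursion becomes
\[
\frac{d_{k+1}}{\a_k^2} + \frac{1}{\mu_f}\cdot\frac{\delta_k}{\a_k} \le \frac{1-\a_k}{\a_k^2}\, d_k + \frac{\tilde C^2}{2\mu_f^2\mu_w}.
\]
The stepsize condition~\eqref{eqn:step} is exactly what lets us telescope: $\frac{1-\a_{k+1}}{\a_{k+1}^2}\le \frac{1}{\a_k^2}$, so the coefficient $\frac{1-\a_k}{\a_k^2} d_k$ appearing on the right at step $k$ is dominated by $\frac{1}{\a_{k-1}^2} d_k$, which is (a bound on) the leading term carried over from step $k-1$. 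Summing the inequality over indices $0,1,\ldots,k$, the $d$-terms collapse; using $\a_0=1$ so that $(1-\a_0)/\a_0^2 = 0$, and $d_0,\delta_t \ge 0$, one is left with
\[
\frac{d_{k+1}}{\a_k^2} + \frac{1}{\mu_f}\sum_{t=0}^k \frac{\delta_t}{\a_t} \le (k+1)\,\frac{\tilde C^2}{2\mu_f^2\mu_w}.
\]
Finally, multiply through by $\a_k^2$, and for the middle term write $\sum_{t=0}^k \frac{\delta_t}{\a_t} = \bigl(\sum_{t=0}^k\frac{1}{\a_t}\bigr)\cdot\frac{1}{\sum_{t=0}^k \frac1{\a_t}}\sum_{t=0}^k\frac{\delta_t}{\a_t}$ and apply Lemma~\ref{lemma:step}, namely $\a_k^2\sum_{t=0}^k\frac1{\a_t}\ge 1$, to convert the $\a_k^2$ multiplying that term into the stated $\frac{1}{\sum_{t=0}^k 1/\a_t}$ factor. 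This reproduces the claimed inequality.

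The main obstacle — or rather the step needing care — is the telescoping: one must be precise about how condition~\eqref{eqn:step} aligns the carried-over $d$-coefficient at each step with the next step's incoming coefficient, and must check that the boundary term at $k=0$ vanishes (which it does because $\a_0=1$) so that no spurious $D_w(x_0,x^*)/\a_0^2$ survives on the right-hand side. A secondary subtlety is the sign bookkeeping: $\delta_t\ge 0$ by optimality of $x^*$ and $d_0\ge 0$, which is what allows us to drop the term $d_0$ (and any intermediate slack) after summing; one should also confirm that strong convexity guarantees $f(x_t)\ge f^*$ so each $\delta_t$ is genuinely nonnegative. Everything else is routine substitution of the definitions~\eqref{eqn:bdprop}--\eqref{eqn:dist} and the two already-proved lemmas.
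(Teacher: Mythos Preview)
Your proposal is correct and follows essentially the same route as the paper: apply Lemma~\ref{lemma:basic} with stepsize $\a_k/\mu_f$, use strong convexity together with~\eqref{eqn:dist} to produce the contraction factor $(1-\a_k)$, divide by $\a_k^2$, telescope via the stepsize condition~\eqref{eqn:step}, and finish with Lemma~\ref{lemma:step}. The only cosmetic difference is that the paper sums over $t=1,\ldots,k$ and then handles the $t=0$ step separately to bound $\EXP{D_w(x_1,x^*)}$, whereas you absorb that case into a single sum by observing $(1-\a_0)/\a_0^2=0$; your packaging is arguably cleaner, and your remark that $\delta_t\ge 0$ is needed when invoking Lemma~\ref{lemma:step} in the last step is exactly the right subtlety to flag.
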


\begin{proof}
Under Assumption~\ref{asum:stochmir}, by Lemma~\ref{lemma:basic} (where $\a_k$ is replaced with
$\frac{\a_k}{\mu_f}$)
for method~\eqref{eqn:mdstoch2} we have almost surely for $z=x^*$ and all $k\ge0$,
\[\EXP{D_w(x_{k+1},x^*)\mid \F_k} + \frac{\a_k}{\mu_f} \langle g_k, x_k-x^*\rangle \le D_w(x_k,x^*)
+\frac{\a_k^2\tilde C^2}{2\mu^2_f\mu_w}.\]
By using the strong convexity of $f$ (cf.\ relation~\eqref{eqn:strongconv}), we obtain
\[\frac{\a_{k}}{\mu_f}\langle g_k,x_k-x^*\rangle
\ge \frac{\a_{k}}{\mu_f}\left(f(x_k)-f(x^*)\right)+\frac{\a_{k}}{2}\|x_k-x^*\|^2
\ge \frac{\a_{k}}{\mu_f}\left(f(x_k)-f(x^*)\right)+\a_{k} D_w(x_k,x^*),\]
where the last inequality follows by the assumed property of $D_w$ in Eq.~\eqref{eqn:dist}.
Combining the preceding two relations and taking the total expectation, we further obtain
for all $k\ge0$,
\begin{equation}\label{eqn:maind}
\EXP{D_w(x_{k+1},x^*)} + \frac{\a_{k}}{\mu_f}\left(\EXP{f(x_k)}-f(x^*)\right)
\le (1- \a_{k}) \EXP{D_w(x_k,x^*)}
+\frac{\alpha^2_{k}\tilde C^2}{2\mu_f^2\mu_w}. \end{equation}
Multiplying both sides of this relation by $1/\a^2_{k}$ and using
$\frac{1- \a_{k}}{\a_{k}^2}\le \frac{1}{\a_{k-1}^2}$ (cf.~Eq.~\eqref{eqn:step}), we have for all $k\ge1$,
\begin{eqnarray*}
\frac{1}{\a^2_{k}}\,\EXP{D_w(x_{k+1},x^*)} + \frac{1}{\a_{k}\mu_f}\left(\EXP{f(x_k)}-f(x^*)\right)
&\le& \frac{1- \a_{k}}{\a_{k}^2}\,\EXP{D_w(x_k,x^*)}
+ \frac{\tilde C^2}{2\mu_f^2\mu_w}\cr
&\le& \frac{1}{\a_{k-1}^2}\, \EXP{D_w(x_k,x^*)}
 +\frac{\tilde C^2}{2\mu_f^2\mu_w}.\end{eqnarray*}
Summing these inequalities over $k, k-1,\ldots,1,$ and using $\a_0=1$, we obtain
\begin{equation}\label{eqn:sfour}
\frac{1}{\a^2_{k}}\, \EXP{D_w(x_{k+1},x^*) }
+ \frac{1}{\mu_f}\,\sum_{t=1}^k\frac{1}{\a_{t}}\left(\EXP{f(x_t)} - f(x^*)\right)
\le \EXP{D_w(x_1,x^*)}+k\,\frac{\tilde C^2}{2\mu_f^2\mu_w}.
\end{equation}
We estimate $\EXP{D_w(x_1,x^*)}$ by using relation~\eqref{eqn:maind} with $k=1$
and the fact that $\a_0=1$. Thus, we have for all $k\ge1$,
\begin{equation}\label{eqn:sfive}
\EXP{D_w(x_{1},x^*)}  + \frac{1}{\mu_f}\left( \EXP{f(x_0)} - f(x^*)\right)
\le\frac{\tilde C^2}{2\mu^2_f\mu_w}.\end{equation}
From relations~\eqref{eqn:sfive} and~\eqref{eqn:sfour} we see that for all $k\ge0$,
\[\frac{1}{\a^2_{k}}\,\EXP{D_w(x_{k+1},x^*) }
+\frac{1}{\mu_f}\,\sum_{t=0}^k\frac{1}{\a_{t}}\left( \EXP{f(x_t)} - f(x^*)\right)
\le (k+1)\frac{\tilde C^2}{2\mu^2_f\mu_w},\]
from which the desired relation follows by multiplying with $\a_{k}^2$ and using the
relation $\a_k^2\ge \frac{1}{\sum_{t=0}^k\frac{1}{\a_t}}$, which holds for all $k\ge0$
by virtue of Lemma~\ref{lemma:step}.
\end{proof}

Lemma~\ref{lemma:ratemirdes} indicates that we can state some special result for a weighted-average
points of the method, defined as follows:
\begin{equation}\label{eqn:aver}
\hat x_k=\frac{1}{(\sum_{t=0}^k\frac{1}{\a_t})}\,\sum_{t=0}^k\frac{1}{\a_{t}} x_t
\qquad\hbox{for all } k\ge0.\end{equation}
Note that $\hat x_k\in X$ for all $k$, as each $\hat x_k$ is a convex combination of points in the set $X$.
From Lemma~\ref{lemma:ratemirdes} we see that
\[\EXP{f(\hat x_k)}-f(x^*)\le (k+1)\a_k^2\,\frac{\tilde C^2}{2\mu_f\mu_w}\qquad\hbox{for all }k\ge0.\]
To make this estimate more meaningful, we turn our attention to the stepsize choices that satisfy the conditions in~\eqref{eqn:step}. From~\eqref{eqn:step}  we obtain
\begin{equation*}
0<\alpha_{k+1}\le\frac{\sqrt{\alpha_k^4+4\alpha^2_k}-\alpha_k^2}{2}\qquad\hbox{for all }k\ge0,
\end{equation*}
starting with $\a_0=1$.
We will consider two specific choices, namely
\begin{equation}\label{eqn:step_paul}
	\alpha_k=\frac{2}{k+1}\qquad\hbox{for all }k\ge0,
\end{equation}
and
\begin{equation}\label{eqn:step_paul2}
\alpha_{k+1}=\frac{\sqrt{\alpha_k^4+4\alpha^2_k}-\alpha_k^2}{2}\qquad\hbox{for all }k\ge0.
\end{equation}
The stepsize in~\eqref{eqn:step_paul} has been proposed by Tseng~\cite{Tseng2008}.
Setting $\alpha_{k+1}=\frac{1}{t_{k+1}}$ in~\eqref{eqn:step_paul2}
will yield the Nesterov sequence $t_{k+1}$ used in the construction of the fastest first-order
algorithm for convex functions with Lipschitz gradients, i.e.,
\[t_{k+1}=\frac{\sqrt{1+4t^2_k}+1}{2}\qquad\hbox{for all }k\ge0,\]
with $t_0=1$ (see Nesterov~\cite{Nesterov83}, also~Beck and Teboulle~\cite{Beck2009}).
By induction, it can be seen that $t_{k}\ge\frac{k+2}{2}$ for all $k$, implying that
the stepsize in~\eqref{eqn:step_paul2} satisfies
\begin{equation}\label{eqn:stepcond}
0<\alpha_k\le\frac{2}{k+1}\quad\hbox{for all }k\ge0\qquad\hbox{with }\a_0=1.
\end{equation}
Thus, both stepsize choices in~\eqref{eqn:step_paul} and~\eqref{eqn:step_paul2}
satisfy relation~\eqref{eqn:stepcond}.
Note that these stepsize choices do not have any tunable parameters.

Now, we provide the convergence result for algorithm~\eqref{eqn:mdstoch2}
with the aforementioned stepsize choices.

\begin{theorem}\label{prop:ratemirdes}
Let $f$ be strongly convex
over $X$ with a constant $\mu_f>0$, and let Assumption~\ref{asum:stochmir} hold.
Also, let the Bregman distance function $D_w$ be such that~\eqref{eqn:dist} holds.
Consider the method~\eqref{eqn:mdstoch2} with
the stepsize sequence $\{\a_k\}$ chosen according to either~\eqref{eqn:step_paul}
or~\eqref{eqn:step_paul2}.
Then, for the iterate sequence $\{x_k\}$ generated by the method,
we have
\[\EXP{\|x_{k+1}-x^* \|^2}
\le \frac{4}{k+1}\,\frac{\tilde C^2}{\mu_f^2\mu_w^2}\qquad\hbox{for all }k\ge0,\]
while for the weighted-average sequence $\{\hat x_k\}$, with $\hat x_k$ defined
in~\eqref{eqn:aver}, we have
\[\EXP{f(\hat x_k)} - f(x^*)
\le \frac{2}{k+1}\,\frac{\tilde C^2}{\mu_f\mu_w}\qquad\hbox{for all }k\ge0,\]
\[\EXP{\|\hat x_k - x^*\|^2}
\le \frac{4}{k+1}\,\frac{\tilde C^2}{\mu_f^2\mu_w}\qquad\hbox{for all }k\ge0,\]
where $x^*$ is the solution of problem~\eqref{eqn:prob}.
\end{theorem}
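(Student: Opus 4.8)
The plan is to obtain all three inequalities as direct consequences of Lemma~\ref{lemma:ratemirdes}, the stepsize bound $0<\a_k\le\frac{2}{k+1}$ from~\eqref{eqn:stepcond}, the lower estimate $D_w(x,z)\ge\frac{\mu_w}{2}\|x-z\|^2$ from~\eqref{eqn:bdstr}, and a standard quadratic-growth property of strongly convex functions at their minimizer. The starting observation is that, since each $x_t\in X$ and $x^*$ is the minimizer of $f$ over $X$, every term $\EXP{f(x_t)}-f(x^*)$ is nonnegative; hence both summands on the left-hand side of the estimate in Lemma~\ref{lemma:ratemirdes} are nonnegative, and dropping one or the other produces two weaker inequalities to be exploited separately, each with the crude bound $(k+1)\a_k^2\le\frac{4}{k+1}$ inserted on the right.

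For the first bound I keep only $\EXP{D_w(x_{k+1},x^*)}$ on the left, obtaining $\EXP{D_w(x_{k+1},x^*)}\le\frac{2}{k+1}\,\frac{\tilde C^2}{\mu_f^2\mu_w}$, then use~\eqref{eqn:bdstr} to replace $D_w(x_{k+1},x^*)$ by $\frac{\mu_w}{2}\|x_{k+1}-x^*\|^2$ and multiply through by $2/\mu_w$; this yields $\EXP{\|x_{k+1}-x^*\|^2}\le\frac{4}{k+1}\,\frac{\tilde C^2}{\mu_f^2\mu_w^2}$. For the second bound I instead keep only the weighted sum, which gives
\[\frac{1}{\sum_{t=0}^k\frac{1}{\a_t}}\sum_{t=0}^k\frac{1}{\a_t}\bigl(\EXP{f(x_t)}-f(x^*)\bigr)\le\frac{2}{k+1}\,\frac{\tilde C^2}{\mu_f\mu_w}.\]
The coefficients $\frac{1/\a_t}{\sum_s 1/\a_s}$ are precisely the convex weights defining $\hat x_k$ in~\eqref{eqn:aver}, so convexity of $f$ (Jensen's inequality) gives $f(\hat x_k)\le\frac{1}{\sum_s 1/\a_s}\sum_t\frac{1}{\a_t}f(x_t)$; taking expectations and subtracting $f(x^*)$ (the weights sum to one) shows $\EXP{f(\hat x_k)}-f(x^*)$ is at most the left-hand side above, which is the second claim.

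For the third bound I use that a $\mu_f$-strongly convex function $f$ with minimizer $x^*$ over the convex set $X$ satisfies $f(y)-f(x^*)\ge\frac{\mu_f}{2}\|y-x^*\|^2$ for every $y\in X$; applying this at $y=\hat x_k\in X$, taking expectations, and combining with the second bound gives $\frac{\mu_f}{2}\EXP{\|\hat x_k-x^*\|^2}\le\EXP{f(\hat x_k)}-f(x^*)\le\frac{2}{k+1}\,\frac{\tilde C^2}{\mu_f\mu_w}$, hence the third claim.

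The whole argument is essentially bookkeeping layered on top of Lemma~\ref{lemma:ratemirdes}; the only point requiring a little care is the quadratic-growth inequality used for the third bound. Since the subgradient selection $g(\cdot)$ fixed in the paper need not be the one certifying the first-order optimality condition $\langle g(x^*),y-x^*\rangle\ge0$ at $x^*$, one should establish the growth bound directly — e.g.\ by restricting $f$ to the segment $[x^*,y]\subset X$ and using that its right derivative at $x^*$ is nonnegative, together with one-dimensional strong convexity — rather than quoting~\eqref{eqn:strongconv} with a convenient subgradient; the uniqueness of $x^*$ (and hence $f^*=f(x^*)$) is also implicitly used throughout.
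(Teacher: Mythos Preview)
Your proposal is correct and follows essentially the same route as the paper: both start from Lemma~\ref{lemma:ratemirdes}, insert the bound $(k+1)\a_k^2\le\frac{4}{k+1}$ from~\eqref{eqn:stepcond}, then drop one nonnegative summand at a time and invoke~\eqref{eqn:bdstr}, Jensen's inequality for the weights in~\eqref{eqn:aver}, and the quadratic-growth property of strongly convex functions at the minimizer. Your explicit remark that both left-hand terms are nonnegative, and your careful justification of the quadratic-growth inequality independent of the particular subgradient selection $g(x^*)$, are valid refinements that the paper leaves implicit.
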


\begin{proof}
By Lemma~\ref{lemma:ratemirdes}
we have for all $k\ge0$,
\[\EXP{D_w(x_{k+1},x^*) }
+\frac{1}{\mu_f}\,\frac{1}{(\sum_{t=0}^k\frac{1}{\a_t}) }\,
\sum_{t=0}^k\frac{1}{\a_{t}}\left( \EXP{f(x_t)} - f(x^*)\right)
 \le (k+1)\a_k^2\,\frac{\tilde C^2}{2\mu_f^2\mu_w}.\]
The stepsize sequence $\{\a_k\}$ chosen according to either~\eqref{eqn:step_paul}
or~\eqref{eqn:step_paul2} satisfies
$\a_k\le \frac{2}{k+1}$ for all $k$ (cf.~Eq.~\eqref{eqn:stepcond}).
Thus, for all $k\ge0$,
\begin{equation}\label{eqn:finale}
\EXP{D_w(x_{k+1},x^*) }
+\frac{1}{\mu_f} \,\frac{1}{(\sum_{t=0}^k\frac{1}{\a_t})}\,
\sum_{t=0}^k\frac{1}{\a_{t}}\left(\EXP{f(x_t)} - f(x^*)\right)
\le \frac{2}{k+1}\,\frac{\tilde C^2}{\mu_f^2\mu_w}.\end{equation}
By using the convexity of the function $f$ and
the definition of the weighted-average $\hat x_k$ in Eq.~\eqref{eqn:aver}, we conclude that
\[\EXP{f(\hat x_k)} - f(x^*)
\le \frac{2}{k+1}\,\frac{\tilde C^2}{\mu_f\mu_w}\qquad\hbox{for all }k\ge0,\]
while by the strong convexity of $f$, we have $f(\hat x_k)-f(x^*)\ge\frac{\mu_f}{2}\|\hat x_k-x^*\|^2$
which yields
\[\EXP{\|\hat x_k - x^*\|^2}
\le \frac{4}{k+1}\,\frac{\tilde C^2}{\mu_f^2\mu_w}\qquad\hbox{for all }k\ge0.\]
Furthermore, from relation~\eqref{eqn:finale} we obtain
\[\EXP{D_w(x_{k+1},x^*) }
\le \frac{2}{k+1}\,\frac{\tilde C^2}{\mu_f^2\mu_w}\qquad\hbox{for all }k\ge0,\]
which by the strong convexity of $w$ (Eq.~\eqref{eqn:bdstr}) yields
\[\EXP{\|x_{k+1}-x^* \|^2}
\le \frac{4}{k+1}\,\frac{\tilde C^2}{\mu_f^2\mu_w^2}\qquad\hbox{for all }k\ge0.\]
\end{proof}

Theorem~\ref{prop:ratemirdes} gives the rate of convergence of the order $\frac{1}{k}$
for the expected distance of the averages
$\hat x_k$ to the solution $x^*$. This rate is known to be the optimal rate achievable
by a stochastic subgradient methods for strongly convex functions, as shown
in~\cite{GhadimiLan2012} and~\cite{Rakhlin2012}.
Specifically, in~\cite{GhadimiLan2012}, the optimal rate is attained by a more involved method
(utilizing averages in a different way), while in~\cite{Rakhlin2012} the rate is attained by suitably sliding
the window of indices over which the iterates are averaged.
We note, however, that Theorem~\ref{prop:ratemirdes} is not as general as the results obtained
in~\cite{GhadimiLan2012}, which can simultaneously handle the case when the function $f$ may have Lipschitz gradients.

The method~\eqref{eqn:mdstoch2} is simple for implementation, as it requires
storing  $x_k$, the weighted average $\hat x_{k-1}$ (initialized with $\hat x_0=x_0$) and
the stepsize-related sum $S_k=\sum_{t=0}^k\frac{1}{\a_t}$ at each iteration.
At iteration $k+1$, the weighted average $\hat x_{k-1}$ gets updated. This can be done recursively
by computing $S_{k+1}=S_k+\frac{1}{\a_k}$, starting with $S_0=0$, and  by setting
\[\hat x_{k}=\frac{S_{k}}{S_{k+1}}\hat x_{k-1} +\left(1-\frac{S_k}{S_{k+1}}\right) x_k.\]

Theorem~\ref{prop:ratemirdes} provides the convergence rate for the expected distances
$\EXP{\|x_k - x^*\|^2}$ for the iterates of the algorithm, but not for their expected function values,
while for the averaged iterates $\hat x_k$, it provides both estimates for the expected distances
and the expected function values. Furthermore, observe that for the expected distances, the estimate
for $\EXP{\|x_k - x^*\|^2}$ scales proportionally to $\mu_w^{-2}$ while
the estimate for $\EXP{\|\hat x_k - x^*\|^2}$ scales proportionally to $\mu_w^{-1}$.
When $\mu_w=1$, such as in the case of the Euclidean
norm and $D_w(x,z)=\frac{1}{2}\|x-z\|_2^2$, then both of these expected distance estimates
are the same, whereas the estimate for  $\EXP{\|\hat x_k - x^*\|^2}$ is better when $\mu_w\in (0,1)$.

In the case when the subgradients $g_k$ are evaluated without any errors, corresponding to
$\tilde g_k=g_k$ in algorithm~\eqref{eqn:mdstoch}, from Theorem~\ref{prop:ratemirdes}
we obtain the following immediate result.

\begin{corollary}
Assume that the subgradients $g(x)$ are uniformly bounded over the set $X$, i.e.,
there is a scalar $C$ such that $\|g(x)\|\le C$ for all $x\in X$.
Let $f$ be strongly convex over $X$ with a constant $\mu>0$.
Also, let the Bregman distance function $D_w$ satisfy~\eqref{eqn:dist}.
Consider the method~\eqref{eqn:mdstoch2} with $\tilde g(x)=g(x)$ and
the stepsize sequence $\{\a_k\}$ chosen according to either~\eqref{eqn:step_paul}
or~\eqref{eqn:step_paul2}.
Then, for the iterate sequence $\{x_k\}$ and the solution $x^*$ of problem~\eqref{eqn:prob}we have
\[\|x_{k+1}-x^* \|^2
\le \frac{4}{k+1}\,\frac{C^2}{\mu_f^2\mu_w^2}\qquad\hbox{for all }k\ge0,\]
while for the weighted-average sequence $\{\hat x_k\}$ we have
\[f(\hat x_k) - f(x^*)
\le \frac{2}{k+1}\,\frac{C^2}{\mu_f\mu_w},\qquad
\|\hat x_k - x^*\|^2
\le \frac{4}{k+1}\,\frac{C^2}{\mu_f^2\mu_w}\qquad\hbox{for all }k\ge0.\]
\end{corollary}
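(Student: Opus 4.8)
The plan is to obtain the Corollary as the immediate specialization of Theorem~\ref{prop:ratemirdes} to the noiseless case $\tilde g(x)=g(x)$. First I would check the hypotheses of Theorem~\ref{prop:ratemirdes}: strong convexity of $f$ over $X$ (with $\mu_f=\mu$), the property~\eqref{eqn:dist} of the Bregman distance, and the stepsize condition are all assumed directly in the Corollary, so the only thing left to verify is Assumption~\ref{asum:stochmir}. With $\tilde g(x)=g(x)$ the unbiasedness $\EXP{\tilde g(x)\mid x}=g(x)$ is trivial, and the uniform bound $\|g(x)\|\le C$ on $X$ gives $\EXP{\|\tilde g(x)\|_*^2\mid x}=\|g(x)\|_*^2\le C^2$; hence Assumption~\ref{asum:stochmir} holds with $\tilde C=C$, exactly as observed right after its statement.

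Next I would invoke the fact that, since $\tilde g_k=g_k$ and the initial point $x_0$ is not random here, the sequences $\{x_k\}$ and $\{\hat x_k\}$ are deterministic, so every (conditional or total) expectation appearing in Theorem~\ref{prop:ratemirdes} is superfluous and can be dropped. Substituting $\tilde C=C$ into the three bounds of Theorem~\ref{prop:ratemirdes} then produces verbatim the three inequalities of the Corollary, namely the bound on $\|x_{k+1}-x^*\|^2$, the bound on $f(\hat x_k)-f(x^*)$, and the bound on $\|\hat x_k-x^*\|^2$, the last two following from Theorem~\ref{prop:ratemirdes} via the convexity and strong convexity of $f$ as in its proof.

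Because the argument is purely a specialization of an already-proved theorem, there is no substantive obstacle. The main (and essentially only) point deserving a word of care is the reconciliation of the hypothesis $\|g(x)\|\le C$ with the dual-norm quantity $\|\cdot\|_*$ appearing in Assumption~\ref{asum:stochmir}, together with the bookkeeping that the constant $\mu$ in the Corollary's statement plays the role of the strong-convexity constant $\mu_f$ used in Theorem~\ref{prop:ratemirdes}. Nothing beyond Theorem~\ref{prop:ratemirdes} is required.
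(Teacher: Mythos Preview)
Your proposal is correct and matches the paper's own treatment: the paper presents this Corollary as an immediate consequence of Theorem~\ref{prop:ratemirdes} in the error-free case, and your verification that Assumption~\ref{asum:stochmir} holds with $\tilde C=C$ together with the observation that all expectations become vacuous is exactly the intended argument. Your remark about the primal-versus-dual norm in the hypothesis $\|g(x)\|\le C$ is a fair caveat about the paper's notation, but it does not affect the logic of the specialization.
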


Note that Theorem~\ref{prop:ratemirdes} does not say anything about
the convergence of $\hat x_k$ to the solution $x^*$. However, this can be established
using an analysis similar to that of stochastic approximation
methods~\cite{Ermoliev76,Ermoliev83,Ermoliev88,Borkar08,Polyak87}.
Our convergence analysis is based on a result of Lemma 10, pages 49--50, in~\cite{Polyak87},
which is stated below.

 \begin{lemma}[Lemma 10, \cite{Polyak87}]\label{lemma:polyak}
 Let $\{v_k\}$ be a sequence of non-negative random variables with $\EXP{v_0}<\infty$ and
 such that the following holds almost surely
 \[\EXP{v_{k+1}\mid v_0,\ldots,v_k}\le (1-\alpha_k) v_k+\beta_k\qquad\hbox{for all }k\ge0,\]
 where $\{\a_k\}$ and $\{\beta_k\}$ are deterministic non-negative scalar sequences satisfying
 $0\le \alpha_k\le 1$ for all $k$ and
 \[\sum_{k=0}^\infty\alpha_k=\infty, \qquad \sum_{k=0}^\infty\beta_k<\infty,\qquad
 \lim_{k\to\infty}\frac{\beta_k}{\alpha_k}=0.\]
 Then $\lim_{k\to\infty} v_k=0$ almost surely.
 \end{lemma}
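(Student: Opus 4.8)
The plan is to decouple the two qualitatively different hypotheses: the summability of $\{\beta_k\}$ will give almost sure convergence of $\{v_k\}$ to \emph{some} non-negative limit, while the non-summability of $\{\alpha_k\}$ together with $\beta_k/\alpha_k\to0$ will force the expectations $\EXP{v_k}$ to vanish; Fatou's lemma then glues the two conclusions together. Concretely, I would establish (i) $v_k\to v_\infty$ almost surely for some random variable $v_\infty\ge0$ with $\EXP{v_\infty}<\infty$, and (ii) $\EXP{v_k}\to0$; then $\EXP{v_\infty}\le\liminf_k\EXP{v_k}=0$ by Fatou, so $v_\infty=0$ almost surely, which is the assertion.

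For (i): since $\alpha_k\in[0,1]$, the hypothesis implies $\EXP{v_{k+1}\mid v_0,\dots,v_k}\le v_k+\beta_k$ almost surely. Put $u_k:=v_k+\sum_{j\ge k}\beta_j$, which is finite because $\sum_j\beta_j<\infty$ and is measurable with respect to $\sigma(v_0,\dots,v_k)$ (the $\beta_j$ being deterministic). A one-line computation gives $\EXP{u_{k+1}\mid v_0,\dots,v_k}\le v_k+\beta_k+\sum_{j\ge k+1}\beta_j=u_k$, so $\{u_k\}$ is a non-negative supermartingale for the filtration generated by $\{v_k\}$, with $\EXP{u_0}=\EXP{v_0}+\sum_j\beta_j<\infty$. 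By the supermartingale convergence theorem it converges almost surely to an integrable limit, and since $\sum_{j\ge k}\beta_j\to0$ the sequence $\{v_k\}$ itself converges almost surely to a non-negative, integrable $v_\infty$.

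For (ii): taking total expectations in the hypothesis and using the tower property yields the scalar recursion $a_{k+1}\le(1-\alpha_k)a_k+\beta_k$ for $a_k:=\EXP{v_k}$; one first checks by induction that $a_k\le\EXP{v_0}+\sum_j\beta_j<\infty$ for all $k$, so the manipulation is legitimate. Now fix $\epsilon>0$. Because $\beta_k/\alpha_k\to0$ there is $N$ with $\beta_k\le\epsilon\alpha_k$ for $k\ge N$, hence $a_{k+1}-\epsilon\le(1-\alpha_k)(a_k-\epsilon)$ for $k\ge N$. If $a_k-\epsilon\le0$ for some $k\ge N$, the inequality $0\le1-\alpha_k$ propagates this to all larger indices, so $a_k\le\epsilon$ eventually; otherwise $a_k-\epsilon\le(a_N-\epsilon)\prod_{j=N}^{k-1}(1-\alpha_j)\le(a_N-\epsilon)\exp\!\big(-\sum_{j=N}^{k-1}\alpha_j\big)\to0$, using $\sum_j\alpha_j=\infty$. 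In either case $\limsup_k a_k\le\epsilon$; letting $\epsilon\downarrow0$ and using $a_k\ge0$ gives $a_k\to0$.

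The genuinely routine parts are the supermartingale verification and the deterministic estimate; the only points needing care are the case split in (ii) (the scalar sequence may fall below $\epsilon$, and one must observe it then stays there) and confirming all the expectations in play are finite so that the tower property applies. An even shorter route, if one is willing to quote more, is to write the hypothesis as $\EXP{v_{k+1}\mid\F_k}\le v_k-\alpha_k v_k+\beta_k$ and invoke the Robbins--Siegmund almost-supermartingale theorem directly: with $\sum_k\beta_k<\infty$ it simultaneously gives the almost sure convergence of $\{v_k\}$ and $\sum_k\alpha_k v_k<\infty$ almost surely, and on any event where $\lim_k v_k>0$ the latter would contradict $\sum_k\alpha_k=\infty$; hence $v_k\to0$ almost surely.
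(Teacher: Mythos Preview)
The paper does not supply its own proof of this lemma: it is quoted verbatim as ``Lemma 10'' from Polyak's book \cite{Polyak87} and used as a black box in the proof of Theorem~\ref{thm:asconv}. So there is nothing in the paper to compare your argument against.

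That said, your proposal is correct. The two-step plan (supermartingale convergence for almost sure existence of a limit, a deterministic recursion on expectations to force that limit to zero via Fatou) is the standard way these stochastic-approximation lemmas are proved, and your execution is clean. The one place worth a closer look is the case split in step (ii): when $a_k-\epsilon\le0$, you need $(1-\alpha_k)(a_k-\epsilon)\le0$, which holds because $1-\alpha_k\ge0$; you state this but it is easy to misread, so in a final write-up you might spell out that once $a_k\le\epsilon$ the inequality $a_{k+1}-\epsilon\le(1-\alpha_k)(a_k-\epsilon)\le0$ keeps $a_{k+1}\le\epsilon$. The Robbins--Siegmund shortcut you mention at the end is also valid and is in fact closer in spirit to how Polyak presents the surrounding material; either route is acceptable here.
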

Using Lemma~\ref{lemma:polyak},
we establish the almost sure convergence of the iterates and their averages in the following.

\begin{theorem}\label{thm:asconv}
Under the assumptions of Theorem~\ref{prop:ratemirdes},
the iterates $x_k$ and their weighted averages $\hat x_k$, as defined in~\eqref{eqn:aver},
converge to the optimal point $x^*$ almost surely.
\end{theorem}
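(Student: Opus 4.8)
The plan is to apply Lemma~\ref{lemma:polyak} (Polyak's Lemma~10) to the sequence $v_k=D_w(x_k,x^*)$, which is a non-negative random variable adapted to $\F_k$ with $\EXP{v_0}<\infty$ by our blanket assumptions on $x_0$. The key recursion is already available: combining Lemma~\ref{lemma:basic} (with $\a_k$ replaced by $\a_k/\mu_f$) applied at $z=x^*$ with the strong convexity relation~\eqref{eqn:strongconv} and the Bregman upper bound~\eqref{eqn:dist} — exactly the steps carried out in the proof of Lemma~\ref{lemma:ratemirdes} — gives, almost surely for all $k\ge0$,
\[
\EXP{D_w(x_{k+1},x^*)\mid\F_k}\le (1-\a_k)\,D_w(x_k,x^*)+\frac{\a_k^2\tilde C^2}{2\mu_f^2\mu_w},
\]
after dropping the non-negative term $\frac{\a_k}{\mu_f}(f(x_k)-f(x^*))$. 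This is precisely of the form required by Lemma~\ref{lemma:polyak} with $\beta_k=\a_k^2\tilde C^2/(2\mu_f^2\mu_w)$. It remains to check the three conditions on $\{\a_k\}$ and $\{\beta_k\}$. For either stepsize choice~\eqref{eqn:step_paul} or~\eqref{eqn:step_paul2} we have $0<\a_k\le 1$; moreover $\a_k\ge c/(k+1)$ for some constant $c>0$ (trivially $c=2$ for~\eqref{eqn:step_paul}, and for~\eqref{eqn:step_paul2} one checks by induction that $t_k\le k+1$, equivalently $\a_k\ge 1/(k+1)$), so $\sum_k\a_k=\infty$. From $\a_k\le 2/(k+1)$ we get $\beta_k\le 2\tilde C^2/((k+1)^2\mu_f^2\mu_w)$, hence $\sum_k\beta_k<\infty$, and $\beta_k/\a_k\le \a_k\tilde C^2/(2\mu_f^2\mu_w)\to 0$. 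Therefore Lemma~\ref{lemma:polyak} yields $D_w(x_k,x^*)\to 0$ almost surely, and by the strong convexity of $w$ (relation~\eqref{eqn:bdstr}) we conclude $\|x_k-x^*\|\to 0$ almost surely; that is, $x_k\to x^*$ with probability one.

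For the weighted averages $\hat x_k$ defined in~\eqref{eqn:aver}, the plan is to deduce convergence directly from $x_k\to x^*$ a.s. rather than running a separate martingale argument. Fix a sample path on which $x_k\to x^*$ (this holds a.s.). Then $\hat x_k$ is a weighted average of $x_0,\dots,x_k$ with non-negative weights $\lambda_{t,k}=(1/\a_t)/(\sum_{s=0}^k 1/\a_s)$ summing to $1$. Since $\sum_{t=0}^\infty 1/\a_t=\infty$ (because $1/\a_t\ge 1/2$ for all $t$, or simply $1/\a_t\ge (t+1)/2$), the denominator $\sum_{s=0}^k 1/\a_s\to\infty$, so for each fixed $t$ the weight $\lambda_{t,k}\to 0$ as $k\to\infty$; this is the Toeplitz/Silverman–Toeplitz condition for a weighted average of a convergent sequence to converge to the same limit. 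Concretely, given $\e>0$ pick $N$ with $\|x_t-x^*\|\le\e$ for $t\ge N$, split $\hat x_k-x^* =\sum_{t=0}^{N-1}\lambda_{t,k}(x_t-x^*)+\sum_{t=N}^k\lambda_{t,k}(x_t-x^*)$; the first sum $\to 0$ since each $\lambda_{t,k}\to 0$ and there are finitely many terms, while the second is bounded in norm by $\e\sum_{t=N}^k\lambda_{t,k}\le\e$. Hence $\hat x_k\to x^*$ on this path, and therefore $\hat x_k\to x^*$ almost surely.

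I do not anticipate a serious obstacle: the heart of the argument is already packaged in Lemma~\ref{lemma:ratemirdes}'s derivation and in Lemma~\ref{lemma:polyak}. The one point requiring a small verification is the lower bound $\a_k\ge 1/(k+1)$ for the Nesterov-type recursion~\eqref{eqn:step_paul2}, needed for $\sum_k\a_k=\infty$; this follows from $t_k\le k+1$, proved by induction using $t_{k+1}=(1+\sqrt{1+4t_k^2})/2\le (1+(1+2t_k))/2=1+t_k$ with $t_0=1$. The other mild care point is the Toeplitz step for $\hat x_k$: one must invoke $\sum_t 1/\a_t=\infty$ to guarantee the individual weights vanish, which indeed holds since the stepsizes are bounded above by $1$.
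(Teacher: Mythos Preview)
Your approach is essentially identical to the paper's: apply Lemma~\ref{lemma:polyak} to $v_k=D_w(x_k,x^*)$ using the recursion derived in the proof of Lemma~\ref{lemma:ratemirdes}, then pass to the averages. You are in fact more careful than the paper on two points: you work with the conditional (rather than total) expectation form of the recursion, which is what Lemma~\ref{lemma:polyak} actually requires, and you supply the Toeplitz argument for $\hat x_k\to x^*$, whereas the paper simply asserts that convex combinations of a convergent sequence converge to the same limit without noting that this needs the individual weights $\lambda_{t,k}$ to vanish as $k\to\infty$.
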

\begin{proof}
We start with relation~\eqref{eqn:maind}, as shown in the proof of Lemma~\ref{lemma:ratemirdes}:
\begin{equation}\label{eqn:mainda}
\EXP{D_w(x_{k+1},x^*)}
\le (1- \a_{k}) \EXP{D_w(x_k,x^*)} - \frac{\a_{k}}{\mu_f}\left(f(x_k)-f(x^*)\right)
+\frac{\alpha^2_{k}}{2\mu_f^2\mu_w}\, \tilde C^2. \end{equation}
Both stepsize choices~\eqref{eqn:step_paul} and~\eqref{eqn:step_paul2} are such that
$\sum_{k=0}^\infty \a_k=\infty$ and $\sum_{k=0}^\infty \a_k^2 <\infty$. Thus, we can apply
Lemma~\ref{lemma:polyak} (where $\beta_k=\alpha_k^2$) to conclude that $x_k\to x^*$ almost surely.
Furthermore, since $\hat x_k$ is a convex combination of $x_0,\ldots,x_k$ for each $k$, the vectors
$\hat x_k$ must also converge to $x^*$ almost surely.
\end{proof}

The convergence of the stochastic gradient method has been known for strongly convex functions
with Lipschitz gradients (see Theorem 2, pages 52--53, in~\cite{Polyak87}). Theorem~\ref{thm:asconv}
extends this result to a more general class of stochastic subgradient methods and strongly convex functions
that are not necessarily differentiable.

\section{Compact Constraint Set}\label{sec:compact}
We now consider the stochastic subgradient mirror-descent method of~\eqref{eqn:mdstoch} for the case when the function $f$ is convex while the set $X$ is {\it bounded} in addition to being closed and convex.
Under our blanket assumptions of Section~\ref{sec:prelim}, the compactness of $X$ need not imply
the uniform boundedness of the subgradients of $f$ for $x\in X$, since we did not
impose any suitable condition on the domain of $f$ to ensure this property.
Thus, we will impose this condition explicitly.

\begin{assumption}\label{asum:compact}
Let $X$ be compact and assume there exists a scalar $C>0$ such that
\[ \|g(x)\|_*\le C\qquad\hbox{for all }x\in X.\]
\end{assumption}
In addition, we assume the following for the stochastic subgradient errors.

\begin{assumption}\label{asum:stochmodif}
Let the stochastic subgradient errors be such that
\[\EXP{\tilde g(x)\mid x}=g(x)\qquad\hbox{for all }x\in X,\]
\[\EXP{\|\tilde g(x)- g(x)\|_*^2\mid x}\le \nu^2
\qquad\hbox{for all $x\in X$ and for some scalar $\nu>0$}.\]
\end{assumption}

Under these two assumptions, we can see that for all $x\in X$,
\[\EXP{\|\tilde g(x)\|_*^2\mid x}\le \EXP{(\|\tilde g(x)- g(x)\|_*+\|g(x)\|_*)^2\mid x}
\le2(\nu^2+C^2),\]
implying that Assumption~\ref{asum:stochmir} is satisfied with $\tilde C^2=2(\nu^2+C^2).$

In parallel with the mirror-descent iterates, we consider the weighted averages $\hat x_k$
of the iterates $x_0,x_1,\ldots,x_k$, as given in~\eqref{eqn:aver}.
We have the following basic relation for the weighted iterate-averages.
\begin{lemma}\label{lemma:basicstochmod}
Let Assumptions~\ref{asum:compact} and~\ref{asum:stochmodif} hold.
Also, assume that the stepsize $\a_k$ is non-increasing, i.e., $\a_k\le \a_{k-1}$ for all $k\ge1$. Then,
for the weighted averages $\hat x_k$ of the iterates generated by
algorithm~\eqref{eqn:mdstoch} there holds for all $z\in X$ and $k\ge0$,
\[
 \EXP{f(\hat x_k)}-f(z)\le \frac{1}{\sum_{t=0}^k\frac{1}{\a_{t}}}
 \left(\frac{d_w^2}{\a_{k}^2} +(k+1)\,\frac{C^2 +\nu^2}{\mu_w}\right),\]
 where $d^2_w=\max_{x,y\in X} D_w(x,y)$.
\end{lemma}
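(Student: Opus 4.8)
The plan is to start from Lemma~\ref{lemma:basic} applied to the plain mirror-descent iteration~\eqref{eqn:mdstoch} (not the rescaled version~\eqref{eqn:mdstoch2} of the strongly convex section, since here $f$ is merely convex), and then combine the subgradient inequality with a summation argument weighted by $1/\a_k$. Concretely, Lemma~\ref{lemma:basic} gives, for any fixed $z\in X$,
\[
\EXP{D_w(x_{k+1},z)\mid\F_k}+\a_k\langle g_k,x_k-z\rangle\le D_w(x_k,z)+\frac{\a_k^2\tilde C^2}{2\mu_w},
\]
and by convexity $\langle g_k,x_k-z\rangle\ge f(x_k)-f(z)$, so after taking total expectations and dividing through by $\a_k^2$ one arrives at
\[
\frac{1}{\a_k^2}\EXP{D_w(x_{k+1},z)}+\frac{1}{\a_k}\bigl(\EXP{f(x_k)}-f(z)\bigr)\le\frac{1}{\a_k^2}\EXP{D_w(x_k,z)}+\frac{\tilde C^2}{2\mu_w}.
\]
Here I would use $\tilde C^2=2(C^2+\nu^2)$, which the excerpt has already verified holds under Assumptions~\ref{asum:compact} and~\ref{asum:stochmodif}, so the last term becomes $(C^2+\nu^2)/\mu_w$.

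The key step is the telescoping-style summation. Unlike the strongly convex case, the coefficient of $\EXP{D_w(x_{k+1},z)}$ on the left is $1/\a_k^2$ but the coefficient of $\EXP{D_w(x_k,z)}$ on the right is also $1/\a_k^2$, not $1/\a_{k-1}^2$, so the Bregman terms do not telescope exactly; instead there is a residual because $\a_k\le\a_{k-1}$ implies $1/\a_k^2\ge 1/\a_{k-1}^2$. To handle this I would bound $D_w(x_k,z)\le d_w^2=\max_{x,y\in X}D_w(x,y)$ (finite since $X$ is compact), multiply the inequality instead by $1/\a_k$ rather than $1/\a_k^2$, or — cleanest — sum the inequality in the form $\frac{1}{\a_k}(\EXP{f(x_k)}-f(z))\le \frac{1}{\a_k^2}\EXP{D_w(x_k,z)}-\frac{1}{\a_k^2}\EXP{D_w(x_{k+1},z)}+\frac{C^2+\nu^2}{\mu_w}$ over $t=0,\dots,k$ and control the telescoped-plus-error Bregman sum by the monotonicity of $\a_t$: the positive leftover terms are $\sum_{t=1}^k\bigl(\tfrac{1}{\a_t^2}-\tfrac{1}{\a_{t-1}^2}\bigr)\EXP{D_w(x_t,z)}\le d_w^2\sum_{t=1}^k\bigl(\tfrac{1}{\a_t^2}-\tfrac{1}{\a_{t-1}^2}\bigr)=d_w^2\bigl(\tfrac{1}{\a_k^2}-\tfrac{1}{\a_0^2}\bigr)\le d_w^2/\a_k^2$, while the initial term contributes $\tfrac{1}{\a_0^2}\EXP{D_w(x_0,z)}\le d_w^2$. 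Collecting everything yields
\[
\sum_{t=0}^k\frac{1}{\a_t}\bigl(\EXP{f(x_t)}-f(z)\bigr)\le\frac{d_w^2}{\a_k^2}+(k+1)\,\frac{C^2+\nu^2}{\mu_w}.
\]

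Finally I would divide both sides by $\sum_{t=0}^k 1/\a_t$ and invoke Jensen's inequality (convexity of $f$) together with the definition~\eqref{eqn:aver} of $\hat x_k$ as the $1/\a_t$-weighted average of $x_0,\dots,x_k$: since $\sum_{t=0}^k\tfrac{1}{\a_t}f(x_t)\ge\bigl(\sum_{t=0}^k\tfrac{1}{\a_t}\bigr)f(\hat x_k)$, the left side dominates $\EXP{f(\hat x_k)}-f(z)$, giving exactly the claimed bound. The main obstacle is the non-exact telescoping of the Bregman terms; everything else is the routine Lemma~\ref{lemma:basic}-plus-convexity-plus-Jensen pipeline, and the compactness of $X$ (hence finiteness of $d_w^2$ and applicability of $\tilde C^2=2(C^2+\nu^2)$) is what makes the bookkeeping go through.
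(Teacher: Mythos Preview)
Your proposal is correct and follows essentially the same route as the paper's proof: start from Lemma~\ref{lemma:basic} with $\tilde C^2=2(C^2+\nu^2)$, divide by $\a_k^2$, use the monotonicity of $\a_k$ together with the uniform bound $D_w\le d_w^2$ to control the non-telescoping Bregman terms, sum, and apply convexity of $f$ to pass to $\hat x_k$. One minor bookkeeping point: your bound on the initial term should read $\tfrac{1}{\a_0^2}\EXP{D_w(x_0,z)}\le d_w^2/\a_0^2$ (not $\le d_w^2$, since $\a_0=1$ is not assumed in this lemma), and this combines exactly with the telescoped leftover $d_w^2\bigl(\tfrac{1}{\a_k^2}-\tfrac{1}{\a_0^2}\bigr)$ to give the stated $d_w^2/\a_k^2$.
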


\begin{proof}
Assumptions~\ref{asum:compact} and~\ref{asum:stochmodif} imply Assumption~\ref{asum:stochmir}.
Then, by Lemma~\ref{lemma:basic} with $\tilde C^2=2(C^2+\nu^2)$,
for method~\eqref{eqn:mdstoch} we have almost surely for all $z\in X$ and $k\ge0$,
\begin{equation}\label{eqn:beg}
\EXP{D_w(x_{k+1},z)\mid \F_k}+\a_k \langle g_k, x_k-z\rangle \le D_w(x_k,z)
+\frac{\a_k^2(C^2+\nu^2)}{\mu_w}.
\end{equation}
Dividing the whole inequality with $\a_k^2$ and re-arranging the terms, we obtain
\begin{eqnarray*}
\frac{1}{\a^2_{k}}\,\EXP{D_w(x_{k+1},z)\mid \F_k}
\le \frac{1}{\a_k^2}\, D_w(x_k,z) - \frac{1}{\a_{k}}\langle g_k,x_k-z\rangle
+\frac{C^2+\nu^2}{\mu_w}.\end{eqnarray*}
We now re-write the term $\frac{1}{\a_k^2}\, D_w(x_k,z)$ for $k\ge 1$, as follows
\begin{eqnarray*}
\frac{1}{\a_k^2}D_w(x_k,z)
&=&\frac{1}{\a_{k-1}^2}D_w(x_k,z) +\left(\frac{1}{\a_k^2}-\frac{1}{\a_{k-1}^2}\right) D_w(x_k,z)\cr
&\le&\frac{1}{\a_{k-1}^2}D_w(x_k,z) +\left(\frac{1}{\a_k^2}-\frac{1}{\a_{k-1}^2}\right)d_w^2,\end{eqnarray*}
where we use $\a_k\le \a_{k-1}$ and
$d^2_w=\max_{x,y\in X} D_w(x,y)$, which is finite since $X$ is compact and $D_w$ is continuous over $X$.
By combining the preceding two relations, after re-arranging the terms, we obtain almost surely
for all $z\in X$ and all $k\ge1$,
\begin{align*}
\frac{1}{\a_k^2}\,\EXP{D_w(x_{k+1},z)\mid \F_k} +\frac{1}{\a_{k}}\langle g_k,x_k-z\rangle
& \le \frac{1}{\a_{k-1}^2}\, D_w(x_k,z) +\left(\frac{1}{\a_k^2}-\frac{1}{\a_{k-1}^2}\right)d_w^2
+\frac{C^2+\nu^2}{\mu_w}.\end{align*}
By using the subgradient property (cf.~\eqref{eqn:strongconv} with $\mu=0$) and by taking the total expectation, we  obtain for all $z\in X$  and for all $k\ge1$,
\begin{eqnarray}\label{eqn:sthreemod}
\frac{1}{\a_k^2}\,\EXP{D_w(x_{k+1},z)}
+ \frac{1}{\a_{k}}\left(\EXP{f(x_k)}-f(z)\right)
&\le& \frac{1}{\a_{k-1}^2}\EXP{D_w(x_k,z)} \cr
&&+ \left(\frac{1}{\a_k^2}-\frac{1}{\a_{k-1}^2}\right)d_w^2
+\frac{C^2+\nu^2}{\mu_w}.\qquad
\end{eqnarray}

Summing the inequalities in~\eqref{eqn:sthreemod} over $k, k-1,\ldots,1,$ and using $\a_0=1$, we obtain
for all $z\in X$ and $k\ge1$,
\begin{equation}\label{eqn:medio}
\frac{1}{\a^2_{k}}\, \EXP{D_w(x_{k+1},z)} + \sum_{t=1}^k\frac{1}{\a_{t}}\left(\EXP{f(x_t)} - f(z)\right)
\le \EXP{D_w(x_1,z)} +\left(\frac{1}{\a_k^2}-1\right) d_w^2+\frac{k(C^2+\nu^2)}{\mu_w}.
\end{equation}
We next estimate $\EXP{D_w(x_1,z)}$ by using relation~\eqref{eqn:beg} with $k=0$ and the fact $\a_0=1$.
Upon using the convexity of $f$ and by taking the total expectation, from~\eqref{eqn:beg} (for $k=0$)
we obtain
\[\EXP{D_w(x_{1},z)} + \frac{1}{\a_0}\EXP{f(x_0)} - f(z)\le \EXP{D_w(x_0,z)}+\frac{C^2+\nu^2}{\mu_w}.\]
Since $D_w(x_0,z)\le d_w^2$, it follows that $\EXP{D_w(x_0,z)}\le d_w^2$,
thus implying that for all $z\in X$,
\[\EXP{D_w(x_{1},z)} + \frac{1}{\a_0}\EXP{f(x_0)} - f(z)\le d_w^2+\frac{C^2+\nu^2}{\mu_w}.\]
Using the preceding estimate in Eq.~\eqref{eqn:medio}, we see that for all $z\in X$ and all $k\ge0$,
\[\frac{1}{\a^2_{k}}\,\EXP{D_w(x_{k+1},z)}
+ \sum_{t=0}^k\frac{1}{\a_{t}}\left( \EXP{f(x_t)} - f(z)\right)
\le\frac{1}{\a_k^2} d_w^2+ (k+1)\frac{C^2+\nu^2}{\mu_w}.\]
The desired relation follows upon dividing the preceding inequality with
$\sum_{t=0}^k\frac{1}{\a_{t}}$, using
the convexity of $f$ and dropping the first term on the left-hand side.
\end{proof}

From Lemma~\ref{lemma:basicstochmod} we see that the upper bound on 
$\EXP{f(\hat x_k)}-f(z)$
will converge to zero provided that
\[\lim_{k\to\infty}\frac{\frac{1}{\a_k^2} +(k+1)}  {\sum_{t=0}^k\frac{1}{\a_t}}=0.\]
In what follows, we will investigate the stepsize selection that makes the preceding
convergence to zero as fast as possible.
It is known that the best achievable rate for the standard subgradient method, with or without stochastic errors, is of the order $\frac{1}{\sqrt{k}}$ when $f$ is just a convex function (see the discussion following Theorem 3.2.2 in~\cite{NesterovBook2004} for the subgradient method,
and~\cite{Nemirovski2009} for the stochastic subgradient method).
The aforementioned work shows the estimate and the optimal stepsize selection,
under the premise that a number of iterations is fixed a priori. This rate is shown to be achievable
per iteration by the primal-dual averaging subgradient method recently proposed by
Nesterov~\cite{Nesterov2009}.

We consider a different stepsize choice that will achieve the same rate of $\frac{1}{\sqrt{k}}$.
In particular, let
\begin{equation}\label{eqn:rootstep}
\a_k=\frac{a}{\sqrt{k+1 }}\qquad\hbox{for all }k\ge0,
\end{equation}
where $a>0$ is a parameter, which we will select later.
In this case, the sum $\sum_{t=0}^k\frac{1}{\a_{t}}$ can be bounded from below as given in the following lemma.

\begin{lemma}\label{lemma:sumbound}
For the stepsize $\a_k$ in~\eqref{eqn:rootstep}, we have
$\sum_{t=0}^k\frac{1}{\a_{t}}\ge \frac{2}{3a}(k+1)^{3/2}$ for all $k\ge0$.
\end{lemma}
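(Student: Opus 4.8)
The plan is to bound the sum $\sum_{t=0}^k \frac{1}{\a_t}$ from below by comparing it with an integral, which is the standard trick for sums of this type. With $\a_t = \frac{a}{\sqrt{t+1}}$, we have $\frac{1}{\a_t} = \frac{\sqrt{t+1}}{a}$, so the target quantity is $\frac{1}{a}\sum_{t=0}^k \sqrt{t+1} = \frac{1}{a}\sum_{j=1}^{k+1}\sqrt{j}$. First I would observe that $\sqrt{x}$ is increasing, so on each interval $[j-1,j]$ we have $\sqrt{j} \ge \int_{j-1}^{j}\sqrt{x}\,dx$; summing over $j=1,\ldots,k+1$ gives $\sum_{j=1}^{k+1}\sqrt{j} \ge \int_0^{k+1}\sqrt{x}\,dx = \frac{2}{3}(k+1)^{3/2}$. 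Dividing by $a$ then yields the claimed bound $\sum_{t=0}^k\frac{1}{\a_t} \ge \frac{2}{3a}(k+1)^{3/2}$.

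An alternative that avoids calculus entirely (and might be cleaner in the paper's style) is to proceed by induction on $k$. The base case $k=0$ reads $\frac{1}{\a_0} = \frac{1}{a} \ge \frac{2}{3a}$, which holds. For the inductive step, assuming $\sum_{t=0}^{k}\frac{1}{\a_t}\ge \frac{2}{3a}(k+1)^{3/2}$, it suffices to show $\frac{2}{3a}(k+1)^{3/2} + \frac{\sqrt{k+2}}{a} \ge \frac{2}{3a}(k+2)^{3/2}$, i.e. $\sqrt{k+2} \ge \frac{2}{3}\big((k+2)^{3/2} - (k+1)^{3/2}\big)$. Writing $(k+2)^{3/2}-(k+1)^{3/2} = \int_{k+1}^{k+2}\frac{3}{2}\sqrt{x}\,dx \le \frac{3}{2}\sqrt{k+2}$ (since $\sqrt{x}\le\sqrt{k+2}$ on that interval) closes the induction. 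I would likely present the integral-comparison version as the main argument since it is the shortest.

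There is essentially no obstacle here: the only mild point of care is getting the index shift right (the sum runs over $t=0,\ldots,k$ but $\frac{1}{\a_t}$ involves $\sqrt{t+1}$, so one is really summing $\sqrt{1},\ldots,\sqrt{k+1}$) and making sure the integral lower bound uses the left endpoint $0$ rather than $1$ so that the constant $\frac{2}{3}$ comes out exactly. The estimate is tight up to the constant, which is all that is needed for the subsequent rate analysis.
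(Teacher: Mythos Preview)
Your proposal is correct and uses the same integral-comparison idea as the paper. The only difference is in execution: the paper compares $\sum_{t=0}^k\sqrt{t+1}$ directly with $\int_0^{k+1}\sqrt{t+1}\,dt=\frac{2}{3}\left((k+2)^{3/2}-1\right)$ and then needs a separate argument (via the auxiliary function $\phi(s)=(s+1)^{3/2}-1-s^{3/2}$ and its derivative) to show $(k+2)^{3/2}-1\ge (k+1)^{3/2}$, whereas you reindex to $\sum_{j=1}^{k+1}\sqrt{j}$ and compare with $\int_0^{k+1}\sqrt{x}\,dx=\frac{2}{3}(k+1)^{3/2}$ directly, which lands on the target bound in one step. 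Your route is slightly cleaner for precisely this reason; otherwise the two proofs are the same in spirit.
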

\begin{proof}
Viewing the sum as an upper-estimate of the integral of the function $t\mapsto\sqrt{t+1}$, we see that
for any $k\ge0$,
\[\sum_{t=0}^k\frac{1}{\a_{t}}=\frac{1}{a}\sum_{t=0}^k\sqrt{t+1}
\ge \frac{1}{a}\int_{t=0}^{k+1}\sqrt{t+1} \,dt=\frac{2}{3a}\left( (k+2)^{3/2}-1\right).\]
We claim that $(k+2)^{3/2}-1\ge (k+1)^{3/2}$ for all $k\ge0$. To show this, we let $s=k+1$ and consider the scalar function $\phi(s)=(s+1)^{3/2}-1-s^{3/2}$ for $s\ge1$. Thus, to prove the claim, it suffices to show that
$\phi(s)\ge0$ for all $s\ge1$.  We note that $\phi(0)=0$.
Furthermore, for the derivative of $\phi$ we have
\[\phi'(s)=\frac{3}{2}\left(\sqrt{s+1}-\sqrt{s}\right)\ge0\qquad\hbox{for all }s\ge0.\]
Thus, $\phi(s)$ is increasing over the interval $[0+\infty)$, and since $\phi(1)=0$, it follows that
$\phi(s)\ge0$ for all $s\ge1$.
\end{proof}

When $X$ is compact, due to our basic assumption that $f$ is continuous over $X$, by Weierestrass theorem
the solution set $X^*$ for problem~\eqref{eqn:prob} is not empty.
For the algorithm~\eqref{eqn:mdstoch} with stepsize~\eqref{eqn:rootstep}, we have the following result
as immediate consequence of Lemmas~\ref{lemma:basicstochmod} and~\ref{lemma:sumbound}.

\begin{theorem}\label{prop:bounded}
Let Assumptions~\ref{asum:compact} and~\ref{asum:stochmodif} hold.
Consider algorithm~\eqref{eqn:mdstoch} with stepsize $\a_k=\frac{a}{\sqrt{k+1 }}$.
Then, for the weighted averages $\hat x_k$ of the iterates produced by the algorithm
we have for all $x^*\in X^*$ and $k\ge0$,
\[\EXP{f(\hat x_k)}-f(x^*)\le \frac{3}{2\sqrt{k+1}}\left(\frac{d_w^2}{a}+\frac{a(C^2+\nu^2)}{\mu_w}\right),
\]
where $d^2_w=\max_{x,y\in X} D_w(x,y)$.
\end{theorem}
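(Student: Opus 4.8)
The proof of Theorem~\ref{prop:bounded} is essentially a matter of substituting the stepsize choice~\eqref{eqn:rootstep} into the bound provided by Lemma~\ref{lemma:basicstochmod} and simplifying. Let me sketch this plan.

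First I would verify that the hypotheses of Lemma~\ref{lemma:basicstochmod} are met: Assumptions~\ref{asum:compact} and~\ref{asum:stochmodif} are assumed directly, and the stepsize $\a_k = a/\sqrt{k+1}$ is non-increasing in $k$, so the lemma applies with $z = x^*$ for any $x^* \in X^*$.

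Then I would apply Lemma~\ref{lemma:basicstochmod} to get
\[
\EXP{f(\hat x_k)} - f(x^*) \le \frac{1}{\sum_{t=0}^k \frac{1}{\a_t}}\left(\frac{d_w^2}{\a_k^2} + (k+1)\frac{C^2+\nu^2}{\mu_w}\right).
\]

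Next I would plug in $\frac{1}{\a_k^2} = \frac{k+1}{a^2}$ so that the parenthetical term becomes
\[
\frac{(k+1)d_w^2}{a^2} + \frac{(k+1)(C^2+\nu^2)}{\mu_w} = (k+1)\left(\frac{d_w^2}{a^2} + \frac{C^2+\nu^2}{\mu_w}\right),
\]
and for the prefactor I would invoke Lemma~\ref{lemma:sumbound}, which gives $\sum_{t=0}^k \frac{1}{\a_t} \ge \frac{2}{3a}(k+1)^{3/2}$, hence $\frac{1}{\sum_{t=0}^k \frac{1}{\a_t}} \le \frac{3a}{2(k+1)^{3/2}}$.

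Combining these,
\[
\EXP{f(\hat x_k)} - f(x^*) \le \frac{3a}{2(k+1)^{3/2}}\cdot (k+1)\left(\frac{d_w^2}{a^2} + \frac{C^2+\nu^2}{\mu_w}\right) = \frac{3}{2\sqrt{k+1}}\left(\frac{d_w^2}{a} + \frac{a(C^2+\nu^2)}{\mu_w}\right),
\]
which is exactly the claimed estimate. There is no real obstacle here; the theorem is stated as an immediate consequence of the two cited lemmas, and the only things to be careful about are the algebra of the $a$-dependence (the $d_w^2/a^2$ term picks up one factor of $a$ from the prefactor while the $(C^2+\nu^2)/\mu_w$ term picks up a factor of $a$ and becomes $a(C^2+\nu^2)/\mu_w$) and noting that $X^* \neq \emptyset$ by the Weierstrass theorem, as remarked just before the statement, so that $x^*$ exists.
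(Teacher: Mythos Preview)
Your proposal is correct and follows essentially the same approach as the paper's own proof: apply Lemma~\ref{lemma:basicstochmod} with $z=x^*$ (noting $\a_k$ is non-increasing), substitute $\a_k=a/\sqrt{k+1}$, and use Lemma~\ref{lemma:sumbound} to bound the denominator. The algebra and the verification that $X^*\neq\emptyset$ match the paper's argument.
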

\begin{proof}
Since $\a_k$ is non-increasing, by Lemma~\ref{lemma:basicstochmod} (with $z=x^*$),
we find that for any $x^*\in X^*$  and all $k\ge0$,
\[\EXP{f(\hat x_k)}-f(x^*) \le
\frac{\frac{1}{\a_{k}^2}d_w^2 +(k+1)\,\frac{C^2 +\nu^2}{\mu_w}}{\sum_{t=0}^k\frac{1}{\a_{t}}}.\]
By letting $\a_k=a/\sqrt{k+1}$ and using Lemma~\ref{lemma:sumbound}, we obtain
\begin{eqnarray*}
\EXP{f(\hat x_k)}-f(x^*)
&\le &\frac{ \frac{(k+1)}{a^2}d_w^2 +(k+1)\,\frac{C^2 +\nu^2}{2\mu_w}}{\sum_{t=0}^k\frac{1}{\a_{t}}}\cr
&\le & \frac{(k+1)\left(\frac{d_w^2}{a^2}+\frac{C^2+\nu^2}{\mu_w}\right)}{\frac{2}{3a}(k+1)^{3/2}} \cr
&= &\frac{3}{2\sqrt{k+1}}\left(\frac{d_w^2}{a}+\frac{a(C^2+\nu^2)}{\mu_w}\right).\end{eqnarray*}
\end{proof}

Theorem~\ref{prop:bounded} shows that the expected function value evaluated at the weighted average
$\hat x_k$ achieves the rate of $\frac{1}{\sqrt{k} }$, which is known to be the best for a class of convex
functions (not necessarily differentiable). This rate is also achieved by a stochastic primal-dual
averaging method of Nesterov, as shown in~\cite{Xiao2010}, as well as with a stochastic subgradient
mirror-descent with a different form of averaging~\cite{GhadimiLan2012}.
Let us note that Theorem~\ref{prop:bounded} assumes that the set $X$ is compact. This requirement is needed when a time-varying stepsize is used; see also the window-based averaging with
time-varying stepsize in~\cite{GhadimiLan2012}. Intuitively, the boundedness of $X$ is needed to ensure
the boundedness of the iterate sequence $\{x_k\}$, as the stepsize $\a_k=\frac{1}{\sqrt{k} }$ is ``too large"
to ensure that $\{x_k\}$ is bounded (almost surely)
even when the subgradients and their noise variance are bounded.
However, when the number $N$ of iterations is fixed a priori
and the constant optimal stepsize is employed, the compactness of $X$ is not needed as the
finite sequence $\{x_k, \, 0\le k\le N\}$ is evidently always bounded.

Next, we focus on the best selection
of the parameter $a>0$. When estimates for the subgradient-norm bound $C$, the diameter
$d_w$ of the set $X$, and the bound $\nu$ of the expected error norm are available,
we can select the parameter $a$ optimally by minimizing the term
$\frac{d_w^2}{a}+a\frac{C^2+\nu^2}{\mu_w}$ as a function of $a$, over $a>0$.
By doing so, we find that the minimum of the function
$a\mapsto\frac{d_w^2}{a}+a\frac{C^2+\nu^2}{\mu_w}$ over $a>0$ is attained at $a^*=\frac{d_w}{\sqrt{\frac{C^2+\nu^2}{\mu_w}} },$ with the minimum value of $d_w\sqrt{\frac{C^2+\nu^2}{\mu_w} }$.
Thus, when $a$ in~\eqref{eqn:rootstep} is selected optimally, the result of Theorem~\ref{prop:bounded} reduces to
\begin{equation}\label{eqn:opt}
\EXP{f(\hat x_k)}-f^*\le \frac{3}{2\sqrt{\mu_w}}\frac{d_w\sqrt{C^2+\nu^2}}{\sqrt{k+1}}
\qquad\hbox{for all } k\ge0.\end{equation}
In particular, if the function $w$ is the Euclidean norm, i.e., $w(x)=\frac{1}{2}\|x\|^2_2$, then $\mu_w=1$,
$D_w(x,z)=\frac{1}{2}\|x-z\|_2^2$, and
the preceding bound would reduce to (by using $\tilde C^2=C^2+\nu^2$ in the proof of
Lemma~\ref{lemma:basicstochmod}):
\[\EXP{f(\hat x_k)}-f^*\le \frac{3}{2\sqrt{2}}\frac{(\max_{x,y\in X}\|x-y\|_2) \sqrt{C^2+\nu^2}}{\sqrt{k+1}}
\qquad\hbox{for all } k\ge0.\]

We now consider the method in~\eqref{eqn:mdstoch} in the absence of errors ($\tg_k=g_k$),
which corresponds to the standard subgradient mirror-descent method~\cite{NemYudin1978,Beck2003}.
Then, as an immediate consequence of
Theorem~\ref{prop:bounded}, we have the following result.

\begin{corollary}\label{cor:bounded}
Let Assumption~\ref{asum:compact} hold, and let the subgradients $g_k=g(x_k)$ be used in
method~\eqref{eqn:mdstoch} instead of stochastic subgradients $\tg_k$.
Also, assume that the stepsize
$\a_k$ is given by~\eqref{eqn:rootstep}. Then,
for the weighted averages $\hat x_k$ of the iterates produced by algorithm~\eqref{eqn:mdstoch}
there holds 
for all $k\ge0$,
\[f(\hat x_k)-f^*\le \frac{3}{2\sqrt{k+1}}\left(\frac{d_w^2}{a}+\frac{aC^2}{2\mu_w} \right).\]
Furthermore, every accumulation point of
$\{\hat x_k\}$ is a solution to problem~\eqref{eqn:prob}. Moreover, when the parameter $a$ is selected
so as to minimize the right-hand side of the preceding relation, then the optimal choice is
$a^*=\frac{d_w\sqrt{2\mu_w}}{C}$ and the corresponding error estimate is given by
\[f(\hat x_k)-f^*\le \frac{3}{2\sqrt{2\mu_w}}\frac{d_wC}{\sqrt{k+1}}\qquad\hbox{for all } k\ge0.\]
\end{corollary}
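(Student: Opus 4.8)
For the first estimate the plan is to repeat the chain Lemma~\ref{lemma:basic} $\Rightarrow$ Lemma~\ref{lemma:basicstochmod} $\Rightarrow$ Theorem~\ref{prop:bounded} in the error-free case, where a sharper constant is available. Since $\tg_k=g_k$ and $\|g(x)\|_*\le C$ on $X$ (Assumption~\ref{asum:compact}), Assumption~\ref{asum:stochmir} holds with $\tilde C^2=C^2$, so the computation inside the proof of Lemma~\ref{lemma:basic} — stopped before the conditional expectation, since the iterates are now deterministic — yields, for every $z\in X$ and $k\ge 0$,
\[D_w(x_{k+1},z)+\a_k\langle g_k,x_k-z\rangle\le D_w(x_k,z)+\frac{\a_k^2C^2}{2\mu_w}.\]
Retracing the proof of Lemma~\ref{lemma:basicstochmod} from this inequality (divide by $\a_k^2$; use that $\{\a_k\}$ is non-increasing via $\frac{1}{\a_k^2}D_w(x_k,z)\le\frac{1}{\a_{k-1}^2}D_w(x_k,z)+\big(\frac{1}{\a_k^2}-\frac{1}{\a_{k-1}^2}\big)d_w^2$; apply $\langle g_k,x_k-z\rangle\ge f(x_k)-f(z)$; telescope over $t=k,\dots,1$; add the $t=0$ term; use convexity of $f$) produces
\[f(\hat x_k)-f(z)\le\frac{1}{\sum_{t=0}^{k}\frac{1}{\a_t}}\left(\frac{d_w^2}{\a_k^2}+(k+1)\,\frac{C^2}{2\mu_w}\right)\qquad\hbox{for all }z\in X,\ k\ge0,\]
i.e.\ the analogue of Lemma~\ref{lemma:basicstochmod} with $\frac{C^2+\nu^2}{\mu_w}$ replaced by $\frac{C^2}{2\mu_w}$. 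Substituting $\a_k=a/\sqrt{k+1}$ (so $1/\a_k^2=(k+1)/a^2$) and using $\sum_{t=0}^k\frac{1}{\a_t}\ge\frac{2}{3a}(k+1)^{3/2}$ from Lemma~\ref{lemma:sumbound}, I would get, with $z=x^*\in X^*$,
\[f(\hat x_k)-f^*\le\frac{(k+1)\big(\frac{d_w^2}{a^2}+\frac{C^2}{2\mu_w}\big)}{\frac{2}{3a}(k+1)^{3/2}}=\frac{3}{2\sqrt{k+1}}\left(\frac{d_w^2}{a}+\frac{aC^2}{2\mu_w}\right),\]
which is the first assertion.

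For the subsequential optimality I would argue as follows: each $\hat x_k$ is a convex combination of $x_0,\dots,x_k\in X$, hence $\hat x_k\in X$, and compactness of $X$ guarantees that $\{\hat x_k\}$ has accumulation points. If $\bar x=\lim_j\hat x_{k_j}$ is any of them, then $f(\hat x_{k_j})\ge f^*$ since $\hat x_{k_j}\in X$, while the bound just derived forces $f(\hat x_{k_j})\to f^*$; as $f$ is continuous on $X$ and $X$ is closed, $\bar x\in X$ and $f(\bar x)=\lim_j f(\hat x_{k_j})=f^*$, so $\bar x\in X^*$.

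Finally, to choose the best $a$ I would minimize $\psi(a):=\frac{d_w^2}{a}+\frac{aC^2}{2\mu_w}$ over $a>0$: this function is strictly convex there, and $\psi'(a)=-\frac{d_w^2}{a^2}+\frac{C^2}{2\mu_w}=0$ gives the unique minimizer $a^\star=\frac{d_w\sqrt{2\mu_w}}{C}$; substituting $a^\star$ into $\psi$ and then into the first estimate gives the error bound stated in the corollary. I do not expect a genuine obstacle here: the one point that needs care is the constant bookkeeping — recognizing that in the deterministic case Lemma~\ref{lemma:basic} may be invoked with $\tilde C^2=C^2$ instead of $\tilde C^2=2C^2$, so that the proof of Lemma~\ref{lemma:basicstochmod} must be replayed with $\nu=0$ and this sharper constant rather than cited verbatim.
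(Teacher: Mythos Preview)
Your proposal is correct and follows the same route the paper takes: specialize the stochastic results to the error-free setting and then invoke compactness/continuity for the accumulation-point claim. The paper's own proof is much terser (it just says ``let $\nu=0$ in Theorem~\ref{prop:bounded} and relation~\eqref{eqn:opt}''), but as you rightly note, citing Theorem~\ref{prop:bounded} verbatim with $\nu=0$ would only give $\frac{aC^2}{\mu_w}$, not $\frac{aC^2}{2\mu_w}$; the factor of $2$ requires replaying Lemma~\ref{lemma:basicstochmod} with $\tilde C^2=C^2$ (since $\tilde g_k=g_k$ makes Assumption~\ref{asum:stochmir} hold with $\tilde C^2=C^2$, not $2C^2$). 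So your explicit bookkeeping is not just pedantry---it is what actually justifies the constant stated in the corollary.
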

\begin{proof}
The given estimates follow from Theorem~\ref{prop:bounded} and relation~\eqref{eqn:opt}, respectively,
by letting $\nu=0$.
The statement about the accumulation points of $\{\hat x_k\}$ follows from the boundedness of
$\{\hat x_k\}$ (due to $X$ being compact)
and our basic underlying assumption on continuity of $f$ at all points $x\in X$.
\end{proof}

An interesting insight from Corollary~\ref{cor:bounded} is that, while we have no guarantees that the accumulation points of the iterate sequence $\{x_k\}$ are related to the solutions of problem~\eqref{eqn:prob}
or not,  all the accumulation points of the weighed averages $\hat x_k$ are solutions of the problem.
Specifically, our stepsize $\a_k=\frac{a}{\sqrt{k+1 }}$ does not satisfy the standard conditions
that ensure the convergence of $\{x_k\}$ to solution set, namely, $\sum_{k=0}^\infty \a_k=\infty$ and
$\sum_{k=0}^\infty \a_k^2<\infty$. Thus, we do not have a basis to assert that the
accumulation points of $\{x_k\}$ lie in the solution set $X^*$.
However, Corollary~\ref{cor:bounded} shows that the accumulation points of the averaged sequence
$\{\hat x_k\}$ belong to the solution set.

Corollary~\ref{cor:bounded} shows that the optimal error rate can be achieved with a subgradient
mirror-descent method augmented with an outside weighted averaging of the iterates.
It is an alternative optimal method in addition to the primal-dual averaging subgradient method of Nesterov~\cite{Nesterov2009}.

Now, we extend the convergence result for $\hat x_k$ stated in Theorem~\ref{prop:bounded}.
As noted earlier, the stepsize in~\eqref{eqn:rootstep} does not satisfy the standard
condition $\sum_{k}\a_k^2<\infty$ that is typically used to establish the almost sure
convergence of $\{x_k\}$ to the solution set. Thus, Theorem~\ref{prop:bounded}
does not provide us with such information.
However, Theorem~\ref{prop:bounded} can be used as a starting point, which leads to the following result.

\begin{theorem}\label{prop:boundedconv}
Under the assumptions of Theorem~\ref{prop:bounded}, for the weighted-average sequence $\{\hat x_k\}$
of the iterates generated by method~\eqref{eqn:mdstoch}, we have almost surely
\[\liminf_{k\to\infty} f(\hat x_k)=f^*,\qquad\liminf_{k\to\infty} {\rm dist} (\hat x_k,X^*) =0,\]
where ${\rm dist}(x,Y)$ denotes the distance from a point $x$ to a set $Y\subset \re^n$.
Furthermore, almost surely it holds
\[\lim_{k\to\infty} \left(\min_{0\le t\le k} f(x_t)-f^*\right)=0,\qquad
\lim_{k\to\infty} \left(\min_{0\le t\le k}{\rm dist} (x_t,X^*) \right)=0.\]
\end{theorem}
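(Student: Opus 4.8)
The plan is to derive all four assertions from the expected–value bound already available, namely $\EXP{f(\hat x_k)}-f^*\le \tfrac{B}{\sqrt{k+1}}$ with $B:=\tfrac32\bigl(\tfrac{d_w^2}{a}+\tfrac{a(C^2+\nu^2)}{\mu_w}\bigr)$ (Theorem~\ref{prop:bounded}), and then upgrade ``convergence in $L^1$'' to an almost-sure statement by two different devices. Throughout I would use one elementary compactness fact: since $X$ is compact and $f$ is continuous, any sequence $\{y_i\}\subset X$ with $f(y_i)\to f^*$ satisfies $\dist(y_i,X^*)\to 0$ (every limit point of such a sequence lies in $X^*$, and $\dist(\cdot,X^*)$ is continuous); equivalently, for every $\e>0$ there is $\delta>0$ with $\{y\in X:f(y)-f^*<\delta\}\subseteq\{y\in X:\dist(y,X^*)<\e\}$. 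In particular $X^*\ne\emptyset$.

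\emph{The averaged sequence.} Since $\hat x_k\in X$ we have $f(\hat x_k)-f^*\ge 0$, so Markov's inequality gives $\prob{f(\hat x_k)-f^*>\e}\le B/(\e\sqrt{k+1})$ for each $\e>0$. Along the sparse subsequence $k_j=j^4$ (for which $\sqrt{k_j+1}\ge j^2$) these probabilities are summable in $j$, so Borel--Cantelli yields that almost surely $f(\hat x_{k_j})-f^*\le\e$ for all large $j$; intersecting over $\e=1/m$, $m\ge1$, gives $f(\hat x_{k_j})\to f^*$ a.s. Because $f(\hat x_k)\ge f^*$ always, this forces $\liminf_{k\to\infty}f(\hat x_k)=f^*$ a.s., and the compactness fact applied to $\{\hat x_{k_j}\}$ then gives $\liminf_{k\to\infty}\dist(\hat x_k,X^*)=0$ a.s.

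\emph{The running minima.} Here I would use monotonicity rather than a subsequence. Returning to the inequality established in the proof of Lemma~\ref{lemma:basicstochmod} one step before the convexity of $f$ is invoked, dropping the nonnegative $\tfrac1{\a_k^2}\EXP{D_w(x_{k+1},z)}$ term, taking $z=x^*\in X^*$, and dividing by $S_k:=\sum_{t=0}^k\tfrac1{\a_t}$, one obtains
\[\EXP{\ \sum_{t=0}^k \frac{1}{\a_t S_k}\bigl(f(x_t)-f^*\bigr)\ }\ \le\ \frac{\tfrac1{\a_k^2}d_w^2+(k+1)\tfrac{C^2+\nu^2}{\mu_w}}{S_k}\ \le\ \frac{B}{\sqrt{k+1}},\]
the last step by $\a_k=a/\sqrt{k+1}$ and Lemma~\ref{lemma:sumbound}. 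Since each $f(x_t)-f^*\ge0$ and the weights $1/(\a_tS_k)$, $0\le t\le k$, are nonnegative with sum one, the running minimum $m_k:=\min_{0\le t\le k}(f(x_t)-f^*)$ is bounded above by this convex combination, so $\EXP{m_k}\le B/\sqrt{k+1}$. But $\{m_k\}$ is nonnegative, nonincreasing, and bounded by the deterministic constant $\max_{x\in X}f(x)-f^*<\infty$, so the bounded (monotone) convergence theorem gives $\EXP{\lim_k m_k}=\lim_k\EXP{m_k}=0$, whence $\lim_{k\to\infty}\min_{0\le t\le k}(f(x_t)-f^*)=0$ almost surely. On the full-measure event where this holds, for each $\e>0$ and all large $k$ some index $t\le k$ has $f(x_t)-f^*<\delta(\e)$, hence $\dist(x_t,X^*)<\e$ by the compactness fact, so $\min_{0\le t\le k}\dist(x_t,X^*)<\e$; letting $\e\downarrow 0$ gives $\lim_{k\to\infty}\min_{0\le t\le k}\dist(x_t,X^*)=0$ a.s.

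\emph{Where the difficulty is.} The only genuinely delicate point is that $\a_k=a/\sqrt{k+1}$ has $\sum_k\a_k^2=\infty$, so the classical route --- applying a Robbins--Siegmund/supermartingale argument to $D_w(x_k,x^*)$ --- is unavailable: the accumulated noise $\tfrac{C^2+\nu^2}{\mu_w}\sum_k\a_k^2$ diverges and one cannot conclude $\sum_k\a_k(f(x_k)-f^*)<\infty$. The device that circumvents this is to work with the \emph{running minimum} $m_k$, whose monotonicity lets the $O(1/\sqrt{k})$ expectation bound be pushed inside the expectation by bounded convergence with no summability assumption whatsoever; for the averaged sequence the issue is milder (only upgrading $L^1$-convergence to an a.s.\ statement), and the polynomially sparse subsequence together with Borel--Cantelli takes care of it. An equally valid alternative to the Markov/Borel--Cantelli step is to observe that $\EXP{f(\hat x_k)}-f^*\to0$ implies $f(\hat x_k)-f^*\to0$ in probability, hence a.s.\ along a subsequence; I would keep the explicit version for definiteness.
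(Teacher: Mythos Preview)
Your argument is correct. The paper's route for the averaged sequence is shorter: from $\EXP{f(\hat x_k)-f^*}\to 0$ together with $f(\hat x_k)-f^*\ge 0$, Fatou's lemma gives $\EXP{\liminf_k(f(\hat x_k)-f^*)}=0$ and hence $\liminf_k f(\hat x_k)=f^*$ a.s.\ in one line, bypassing your Markov/Borel--Cantelli subsequence extraction entirely. For the running minima, on the other hand, your approach is actually cleaner than the paper's. The paper bounds $m_k$ directly by $f(\hat x_k)-f^*$, asserting $\min_{0\le t\le k}f(x_t)\le f(\hat x_k)$ ``since $\hat x_k$ is a weighted average of the iterates''; but convexity of $f$ only gives $f(\hat x_k)\le\sum_t w_t f(x_t)$, not $f(\hat x_k)\ge\min_t f(x_t)$, and the latter can fail (e.g.\ $f(x)=x^2$, $x_0=-1$, $x_1=1$, equal weights). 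Your step back to the pre-convexity estimate $\sum_t w_t(f(x_t)-f^*)$ from Lemma~\ref{lemma:basicstochmod} and the trivial bound $m_k\le\sum_t w_t(f(x_t)-f^*)$ is exactly the right repair. After that point, the paper's Fatou-plus-monotonicity and your bounded-convergence-plus-monotonicity are essentially the same device: in both cases it is the monotonicity of $\{m_k\}$ that upgrades $\EXP{m_k}\to 0$ to $m_k\to 0$ almost surely.
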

\begin{proof}
By Theorem~\ref{prop:bounded} we have
$\lim_{k\to\infty}\left(\EXP{f(\hat x_k)}-f^*\right)=0,$
which by Fatou's lemma and $f(\hat x_k)-f^*\ge 0$ for all $k$ yields
\[\liminf_{k\to\infty}\left( f(\hat x_k)-f^*\right)=0\qquad\hbox{almost surely}.\]
Moreover, as $\{\hat x_k\}$ is bounded
and $f$ is assumed to be continuous at all points $x\in X$, it follows that one of the
(random) accumulation points
of $\{\hat x_k\}$ must be a solution of problem~\eqref{eqn:prob} almost surely.
Moreover, from this relation,
by the continuity of $f$ and the boundedness of $X$, it follows that
$\liminf_{k\to\infty} {\rm dist} (\hat x_k,X^*) =0$ almost surely.

Now, since $\hat x_k$ is a weighted average of the iterates $x_0,\ldots, x_k$
we have for all $k\ge0,$
\begin{equation}\label{eqn:last}
0\le \min_{0\le t\le k} f(x_t)-f^*\le f(\hat x_k)-f^*,\end{equation}
and by taking the expectation we obtain
\[0\le \EXP{\min_{0\le t\le k} f(x_t)-f^*}\le \EXP{f(\hat x_k)-f^*}.\]
Then, by letting $k\to\infty$ and using Theorem~~\ref{prop:bounded} we see that
\[\lim_{k\to\infty}\EXP{\min_{0\le t\le k} f(x_t)-f^*}=0.\]
Since $\min_{0\le t\le k} f(x_t)-f^*\ge0$ for all $k$, by Fatou's lemma we obtain
almost surely
\[\liminf_{k\to\infty}\left( \min_{0\le t\le k} f(x_t)-f^*\right)=0.\]
The sequence $\min_{0\le t\le k} f(x_t)$ is non-increasing and bounded below so
it has a limit, implying that
\[\lim_{k\to\infty} \min_{0\le t\le k} f(x_t)=f^*\qquad\hbox{almost surely}.\]
Again, by the continuity of $f$ and the compactness of $X$, the preceding relation implies that
almost surely
\[\lim_{k\to\infty} \left(\min_{0\le t\le k}{\rm dist} (x_t,X^*) \right)=0.\]
\end{proof}

The results of  Theorem~\ref{prop:boundedconv} are new.
As a direct consequence of Theorem~\ref{prop:boundedconv}, for an error-free subgradient
mirror-descent method~\eqref{eqn:mdstoch}, with $\a_k=\frac{a}{\sqrt{k+1 }}$, we have that
\[\lim_{k\to\infty} \min_{0\le t\le k} f(x_t)=f^*,\qquad
\lim_{k\to\infty} \left(\min_{0\le t\le k}{\rm dist} (x_t,X^*) \right)=0.\]
Thus, either there is some $k_0$ such that $x_{k_0}\in X^*$
or there is a subsequence $\{x_{k_i}\}$ converging to some optimal point.
Moreover, by relation~\eqref{eqn:last} and Corollary~\ref{cor:bounded}  we have the following error estimate
for the iterate sequence $\{x_k\}$:
\[\min_{0\le t\le k} f(x_t)-f^*\le \frac{3}{2\sqrt{k+1}}\left(\frac{d_w^2}{a}+\frac{aC^2}{2\mu_w} \right)
\qquad\hbox{for all }k\ge0.\]
For the optimal choice of the parameter $a$, the corresponding error estimate is given by
\[\min_{0\le t\le k} f(x_t)-f^*\le \frac{3}{2\sqrt{2\mu_w}}\frac{d_wC}{\sqrt{k+1}}\qquad\hbox{for all } k\ge0.\]
The preceding decrease rate for $\min_{0\le t\le k} f(x_t)-f^*$ of the order $1/\sqrt{k}$
for the error-free subgradient mirror-descent method
has been known, as shown in Theorem~4.2 of \cite{Beck2003}, where a different averaging has been used.

\section{Numerical Results}\label{sec:numerics}
In our experiment, we consider the following stochastic utility model \cite{LanNS12}:
\begin{equation}\label{eqn:f}
\min_{x\in X} f(x) ,\qquad f(x):= \EXP{\phi\left(\sum_{i=1}^n (a_i + \xi_i)x_i\right)} + \frac{\lambda}{2}\|x-z\|_2^2,
\end{equation}
where the scalars $a_i\in\re$ for $i = 1,\ldots,n$, $\lambda\ge 0$ and the vector
$z\in \mathbb{R}^n$ are given, while $\xi_i$ are independent random scalar variables,
each having the standard normal distribution, i.e.,
$\xi_i \sim {\cal N}(0,1)$ for $i=1,\ldots,n$.
The constraint set $X$ is given by
\[X := \left\{x\in \mathbb{R}^n \mid \sum_{i=1}^n x_i\le R, \, 0\le x_i\le u,\,  i = 1,\ldots,n\right\},
\]
where $R >0$ and $u >0$. The function $\phi: \mathbb{R} \to \mathbb{R}$ is piecewise linear, specifically, given by
\[
\phi(t) := \max_{j=1,\ldots,m}\left\{c_j + d_jt\right\},
\]
where $c_j$ and $d_j$ for $j = 1,\ldots,m$ are given scalars.
In the experiments, we used 10 breakpoints ($m=10$) which are all located in $[0,1]$, as shown in
Figure \ref{fig:phi}. For $\lambda >0$ in problem (\ref{eqn:f}),
the function $f(x)$ is strongly convex and, therefore, the problem has a unique optimal solution $x^*$.
With $\lambda = 0$, the problem has a nonempty optimal solution set $X^*$ due to the compactness of the constraint set $X$ and the continuity of $f$.


\begin{figure}[h]
\begin{center}
\includegraphics[scale=0.45]{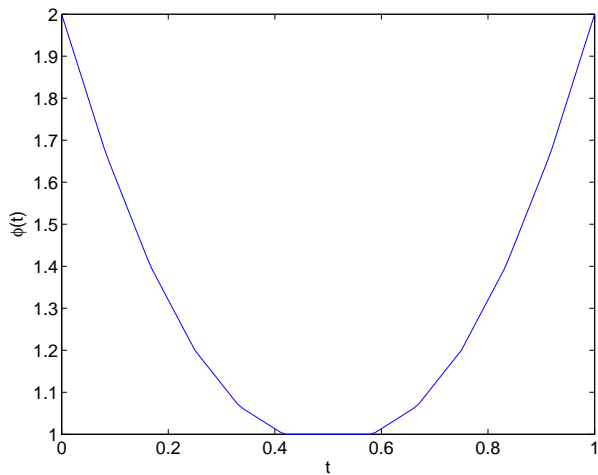}
\caption{\label{fig:phi} The utility function used in the experiments has 10 breakpoints.}
\end{center}
\end{figure}
\vskip -1pc

The experiments were conducted for two instances which have the same dimension $n=100$
and the same parameter $u=10$,
but the instances differ in the values for the parameter $R$. Thus, we have four test instances, which are labeled by \texttt{Test 1, Test 2, Test 3} and \texttt{Test 4}.
Table~\ref{tbl:instances} includes the detailed description of these instances together with
their corresponding initial points $x_0\in X$.
\begin{table}[t]
\begin{center}
\caption{\label{tbl:instances}The test instances and initial points}
\vspace{0.1in}
\begin{tabular}{cccc}\hline
Instance Label & $u$ & $R$ & $x_0$ \\\hline
Test 1 & 10 & 10 & $[0,\ldots,0]'$\\
Test 2 & 10 & 100 & $[0,\ldots,0]'$\\
Test 3 & 10 & 10 & $[\underbrace{1,\ldots,1}_{10},\underbrace{0,\ldots,0}_{90}]'$\\
Test 4 & 10 & 100 & $[\underbrace{10,\ldots,10}_{10},\underbrace{0,\ldots,0}_{90}]'$\\\hline
\end{tabular}
\end{center}
\end{table}

In what follows, we refer to the stochastic subgradient mirror-descent algorithm
briefly as \texttt{SSMD} algorithm. We carried out 100 independent Monte-Carlo runs to evaluate the performance of the SSMD and all the other algorithms that were used for comparison.

\subsection{Strongly Convex Objective Function}
To consider a strongly convex case, we set $\lambda = 100$ and $z=[0.5,~0,~\ldots,~0]'$ for the objective function in (\ref{eqn:f}).
We use $w(x) = \frac{1}{2}\|x\|_2^2$ for defining the Bregman distance function,
in which case the SSMD method corresponds
to the standard stochastic subgradient-projection method:
\[
x_{k+1} = P_X\left[x_k - \frac{\a_k}{\mu_k}\tilde{g}_k\right],
\]
where $P_X[x] \triangleq \arg\min_{v\in X} \|v-x\|_2^2$ is the projection of a point $x$ onto
the set $X$ and $\mu_f$ is the strong convexity parameter of the objective $f$, which is $\lambda$ here.
In the experiments, we take one sample $\xi_i$ to evaluate the stochastic gradient $\tilde{g}_k$.

For comparison, we use the accelerated stochastic approximation (\texttt{AC-SA}) algorithm by Ghadimi
et al.~\cite{GhadimiLan2012}, and we set the parameters as specified in Proposition 9 therein.
We use $x_k^{ag}$ to denote the iterates obtained by \texttt{AC-SA} algorithm,
and $\hat{x}_k$ for  a weighted-average point of the SSMD method, as defined in~\eqref{eqn:aver}.
The SSMD method is simulated for the two stepsize choices, as defined in~\eqref{eqn:step_paul}
and~\eqref{eqn:step_paul2}, which we refer to {\it step-1} and {\it step-2}, respectively.
Figure~\ref{fig:f_sc100} depicts the average (over 100 Monte-Carlo runs)
of the objective values $f(\hat{x}_k)$ (for \texttt{SSMD}) and $f(x_k^{ag})$ (for \texttt{AC-SA})
for the instances listed in Table~\ref{tbl:instances} over 100 iterations.

\begin{figure}[t]
\centering
\mbox{
\begin{subfigure}{.5\textwidth}
\centering
\includegraphics[scale=0.4]{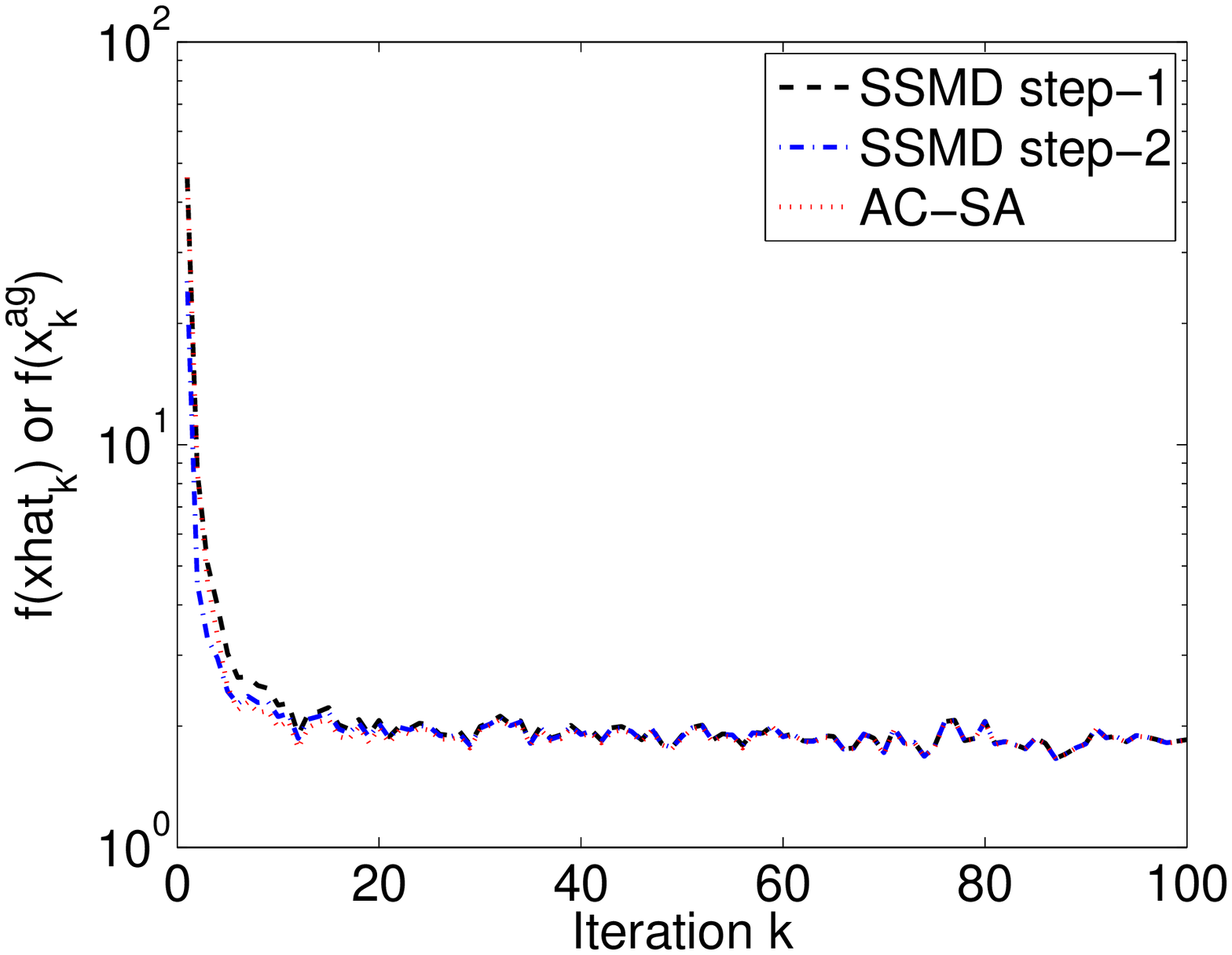}
\caption{Test 1}
\end{subfigure}
\begin{subfigure}{.5\textwidth}
\centering
\includegraphics[scale=0.4]{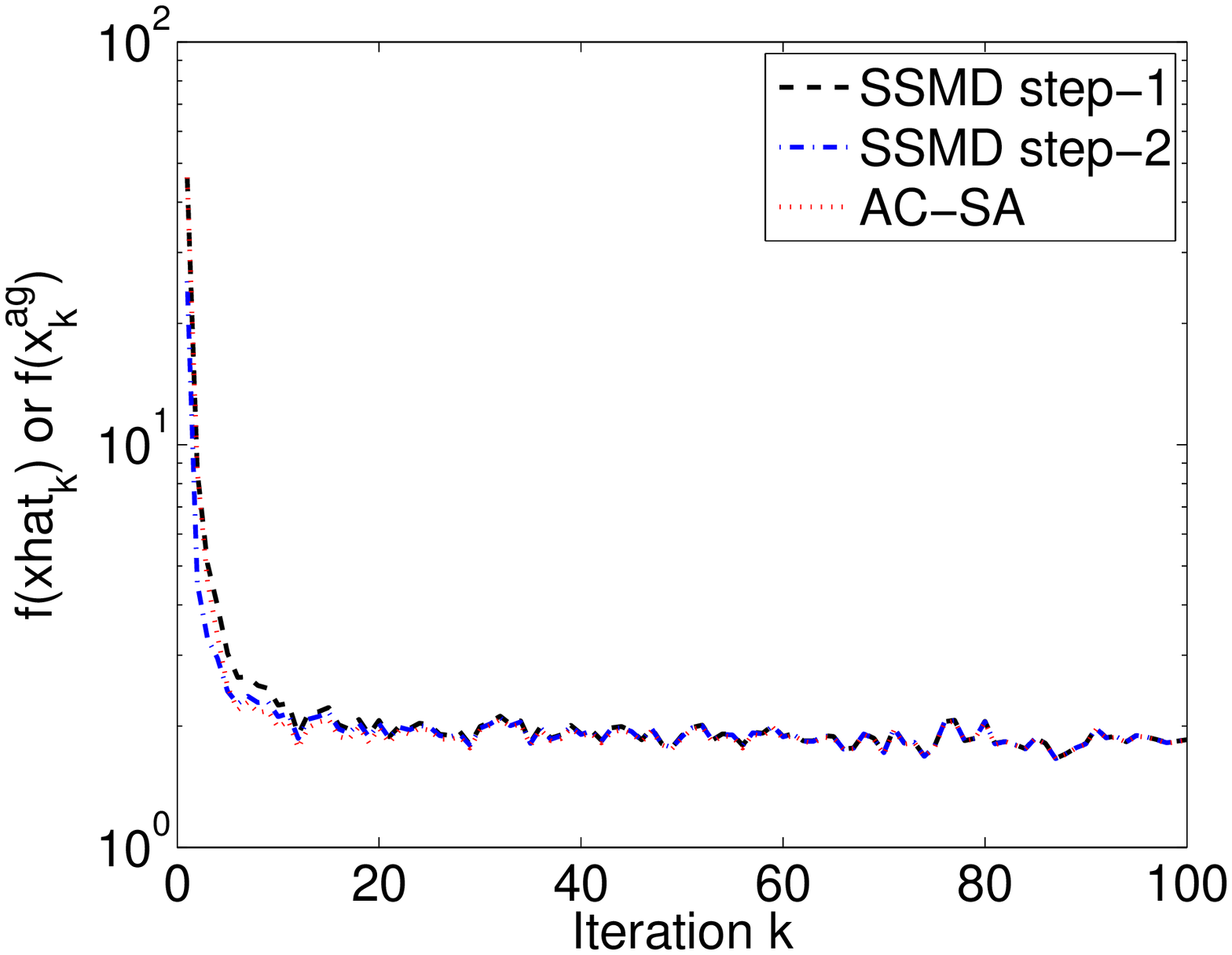}
\caption{Test 2}
\end{subfigure}
}
\mbox{
\begin{subfigure}{.5\textwidth}
\centering
\includegraphics[scale=0.4]{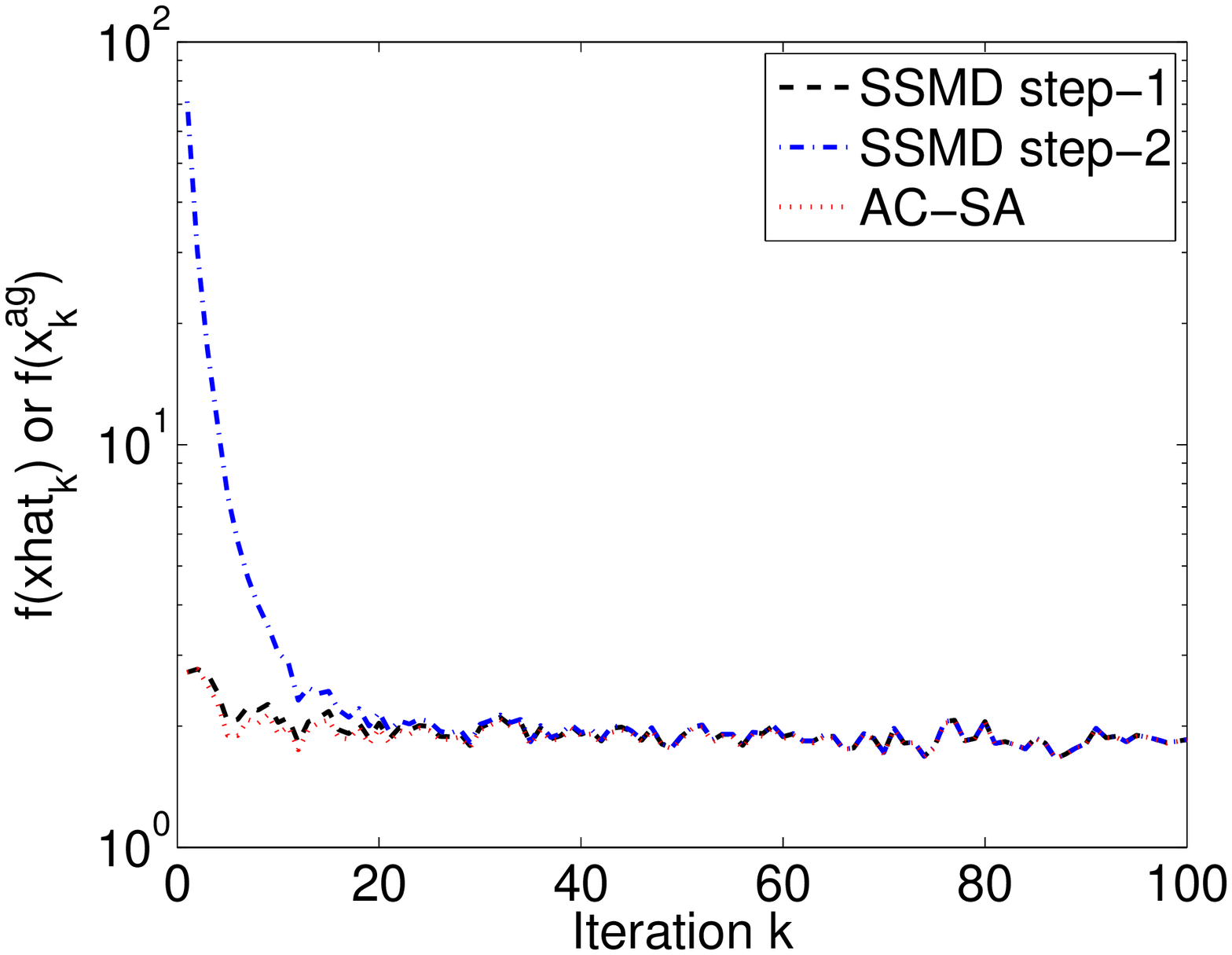}
\caption{Test 3}
\end{subfigure}
\begin{subfigure}{.5\textwidth}
\centering
\includegraphics[scale=0.4]{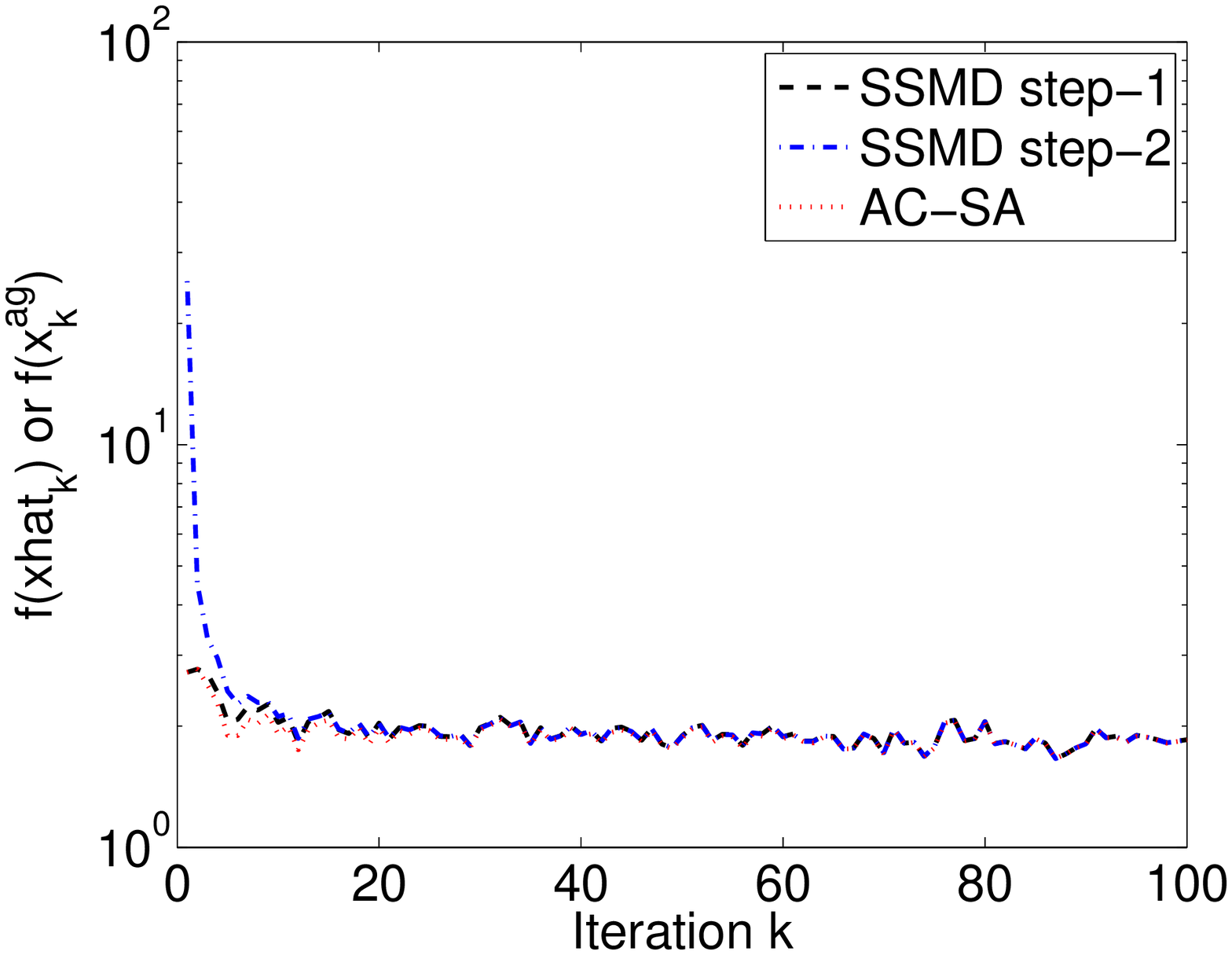}
\caption{Test 4}
\end{subfigure}
}
\caption{\label{fig:f_sc100}
\small The function values
$f(\hat{x}_k)$ and $f(x_k^{ag})$ over 100 iterations for a strongly convex $f$.}
\end{figure}

As seen from Figure~\ref{fig:f_sc100},
the algorithms show almost the same performance after about 20 iterations.
We note that the \texttt{SSMD} method with {\it step-2} (cf.~\eqref{eqn:step_paul2})
is somewhat slower within the initial 20 iterations when the initial points are not set to zero.

\subsection{Compact Constraint Set}
We set $\lambda = 0$ in the objective function given in~\eqref{eqn:f}.
With the choice $w(x)= \frac{1}{2}\|x\|_2^2$ for defining the Bregman distance function,
the \texttt{SSMD} method reduces to $
x_{k+1} = P_X\left[x_k - \a_k\tilde{g}_k\right].$
We use the stepsize $\a_k$ as defined in~\eqref{eqn:rootstep}.

For comparison, in addition to the \texttt{AC-SA} algorithm, we also use
the Nesterov primal-dual (\texttt{PD}) subgradient method~\cite{Nesterov2009}.
For the \texttt{AC-SA} algorithm, we use the parameters specified in Proposition 8 of~\cite{GhadimiLan2012}, while in the \texttt{PD} method, we use the simple dual averaging.
In this case, the algorithms are simulated for 1000 iterations since the convergence of the methods is slower in the absence of strong convexity.

In Figure \ref{fig:f1000}, we show the performance of the algorithms in terms of the average
(over 100 Monte-Carlo runs) of the objective function.
Specifically, we plot $f(\hat{x}_k)$ for \texttt{SSMD} and \texttt{PD} (note that $\hat{x}_k$ has a different definition for \texttt{PD}) and $f(x_k^{ag})$ for \texttt{AC-SA}. The algorithms are tested for
the four instances listed in Table~\ref{tbl:instances}.

\begin{figure}[t]
\centering
\mbox{
\begin{subfigure}{.5\textwidth}
\centering
\includegraphics[scale=0.4]{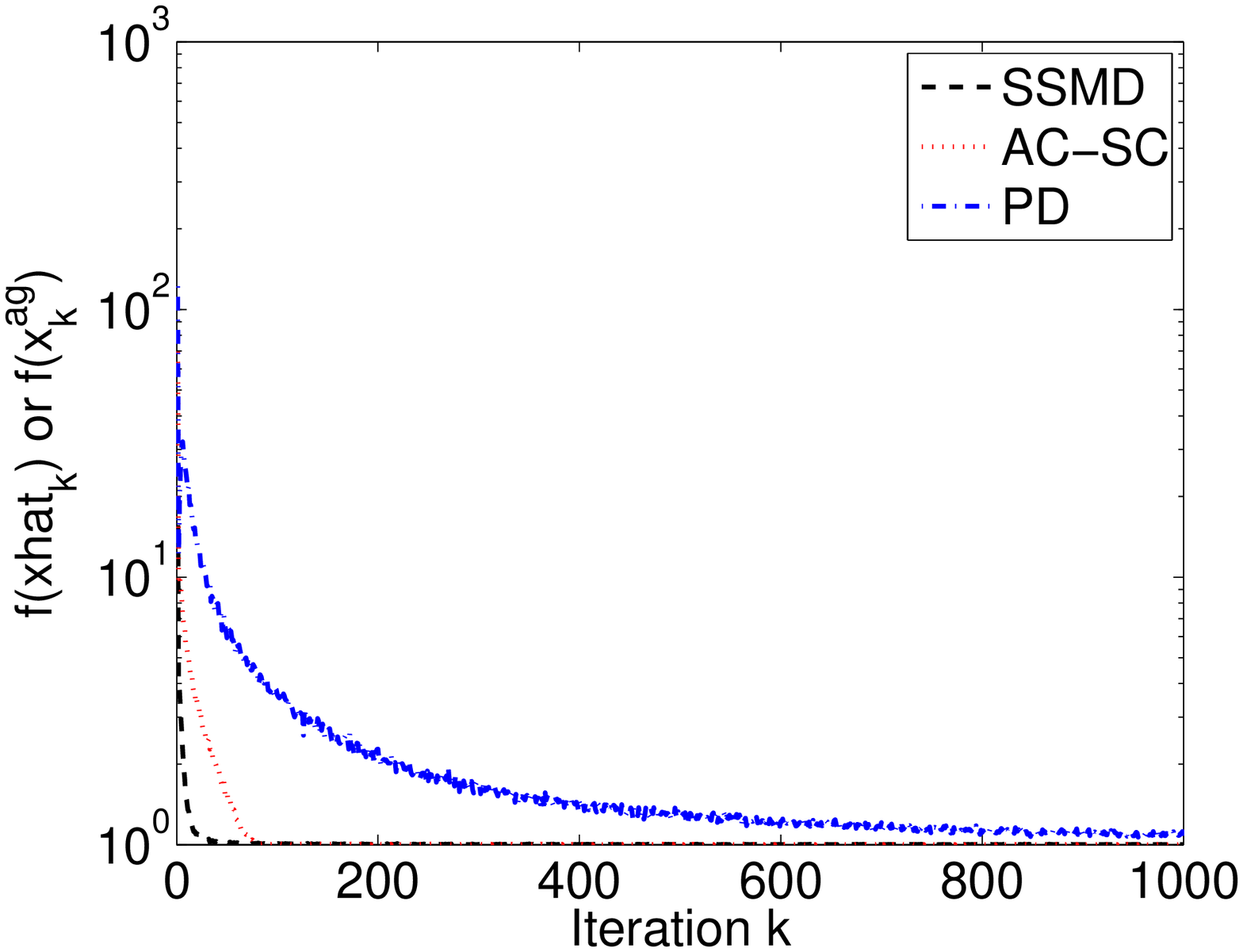}
\caption{Test 1}
\end{subfigure}
\begin{subfigure}{.5\textwidth}
\centering
\includegraphics[scale=0.4]{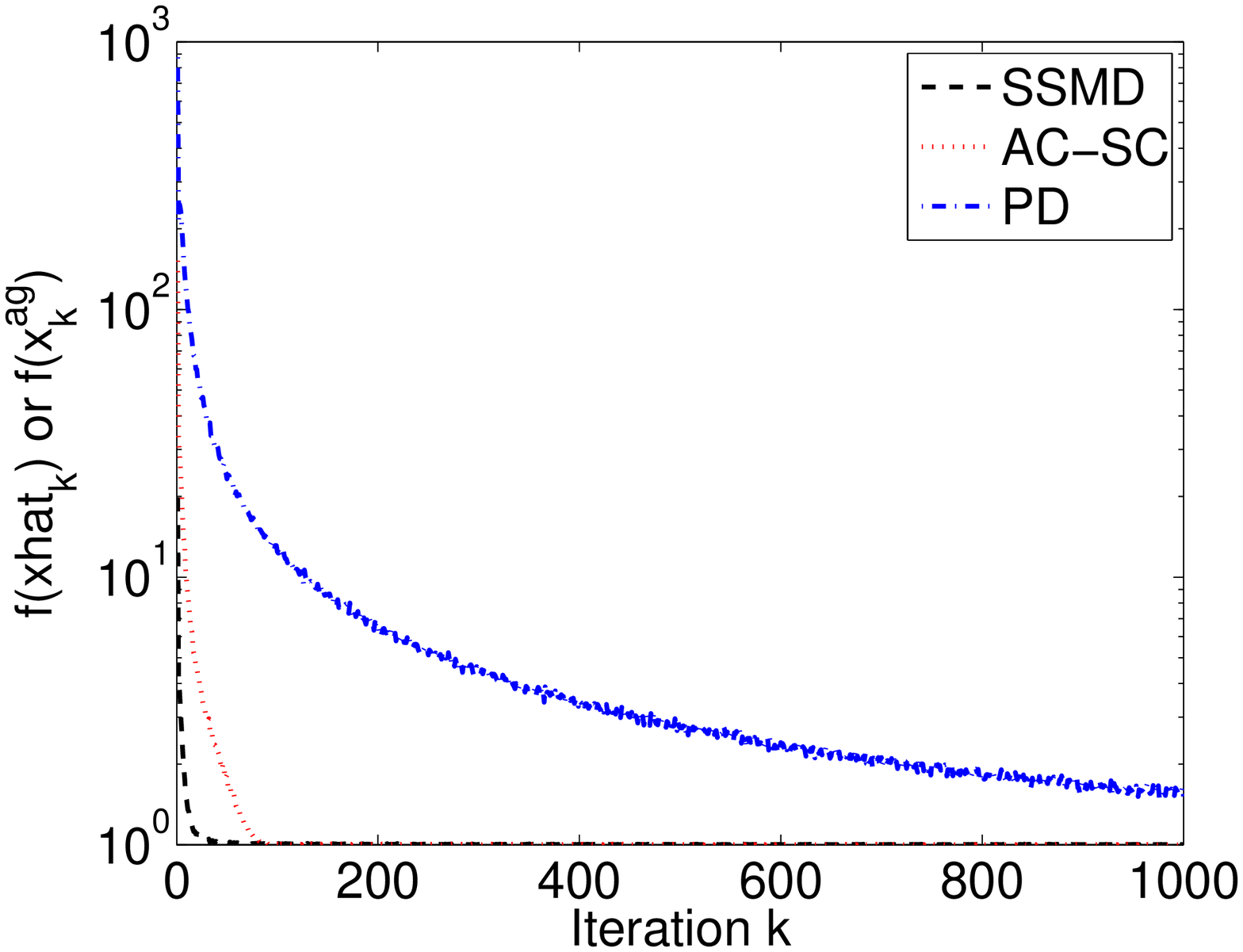}
\caption{Test 2}
\end{subfigure}
}
\mbox{
\begin{subfigure}{.5\textwidth}
\centering
\includegraphics[scale=0.4]{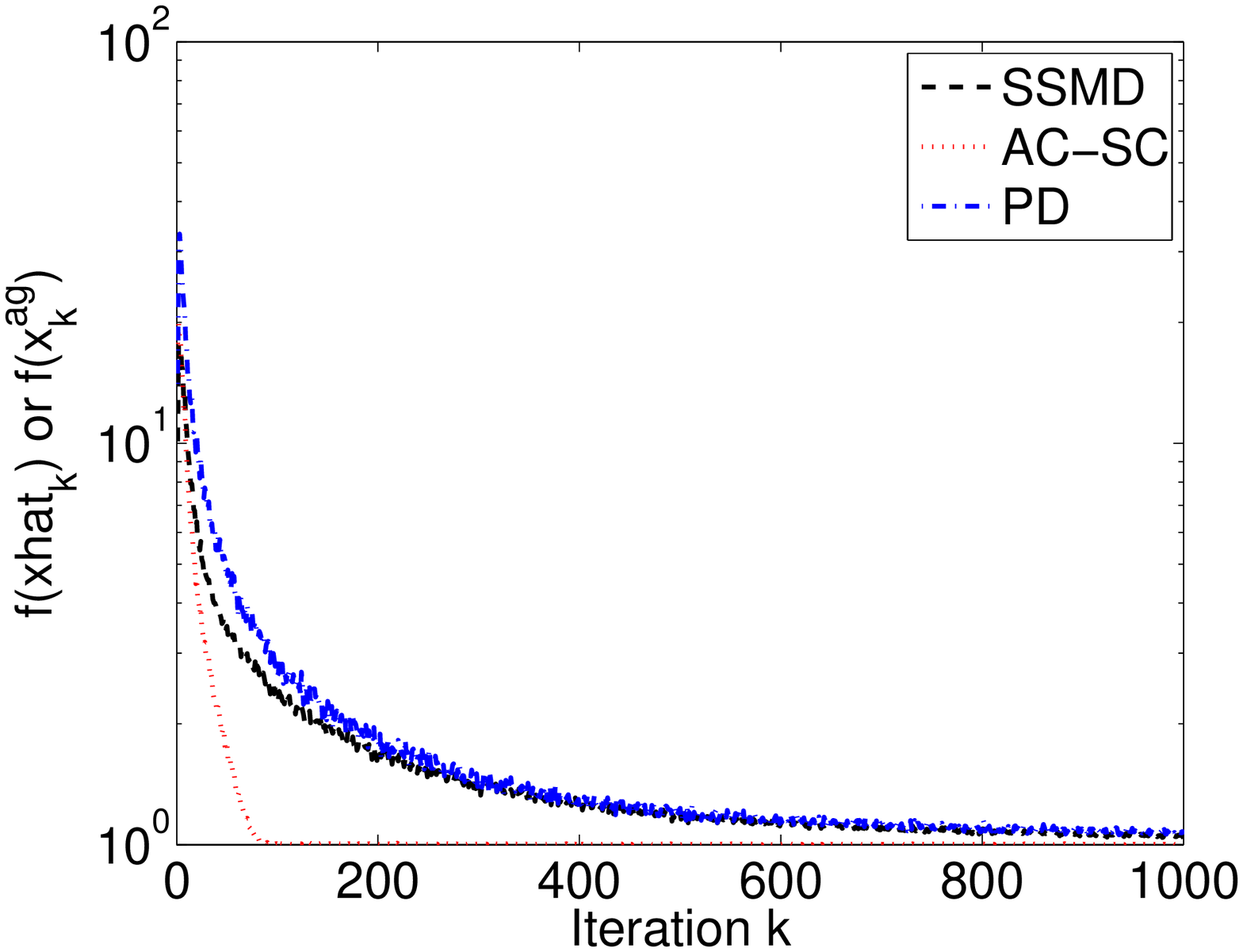}
\caption{Test 3}
\end{subfigure}
\begin{subfigure}{.5\textwidth}
\centering
\includegraphics[scale=0.4]{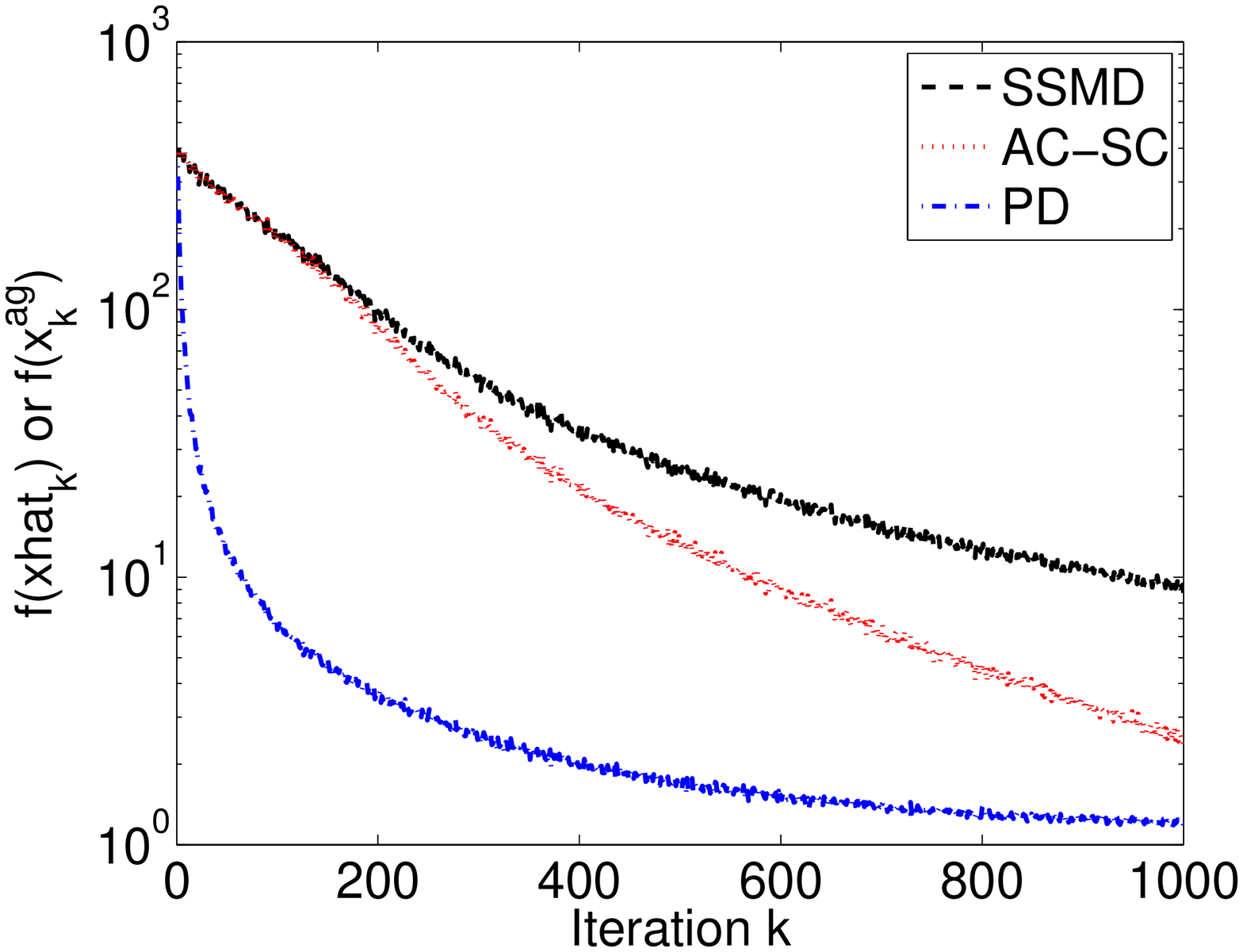}
\caption{Test 4}
\end{subfigure}
}
\caption{\label{fig:f1000} The function values
$f(\hat{x}_k)$ and $f(x_k^{ag})$ over 1000 iterations for a compact constraint set.}
\end{figure}

The \texttt{PD} method has no tunable stepsize parameters.
The \texttt{SSMD} and \texttt{AC-SA} methods have a single tunable parameter for the stepsize.
In particular, the \texttt{SSMD} method with $\a_k=\frac{1}{\sqrt{k+1}}$ has a parameter $a$,
while the \texttt{AC-SA} has a parameter $r$ with a similar role.
Both of these methods are sensitive to the choices for their respective stepsize parameters $a$ and $r$.
We tried several different choices of $a$ and $r$ in the order of tens and we plot their best results.

Overall, the performances of the three algorithms are similar and we see no reasons
to prefer one to the other.
The \texttt{SSMD} and \texttt{AC-SA} methods have a very similar behavior.
The \texttt{SSMD} algorithm performs the best for the instances Test 1 and Test 2 whose initial points
are very close the optimal set. However, in Test 4 the \texttt{AC-SA} has a better performance than the
\texttt{SSMD}.
The \texttt{PD} performs better than \texttt{SSMD} and \texttt{AC-SA} for the Test 4 instance whose feasible reagon is actually not a simplex (the inequality constraint defining the set $X$ is not active in this case).
Another interesting observation is that the \texttt{PD} method is not very sensitive to the initial points and problem instances.

\section{Conclusion}\label{sec:con}
We have considered optimality properties of the stochastic subgradient mirror-descent method
by using the weighted averages of the iterates generated by the method. The novel part of the work
is in the choice of weights that are used in the construction of the iterate averages. Through the use of
proposed weights, we can recover the best known rates for strongly convex functions and just convex functions. We also show some new convergence properties of the stochastic subgradient mirror-descent
method using the stepsize proportional to $1/\sqrt{k+1}$. In addition, we have simulation results
showing that the proposed algorithms have behavior similar to that of accelerated stochastic subgradient method~\cite{GhadimiLan2012} and the primal-dual averaging method of Nestrov~\cite{Nesterov2009}.


\bibliographystyle{plain}        
\bibliography{stochastic_approx}           

\end{document}